\newtheorem{thm}{Theorem}
\newtheorem{cor}[thm]{Corollary}
\newcommand{\NC}{{Q}}
\newcommand{\Q}{{\mathbb Q}}
\newcommand{\R}{{\mathbb R}}
\newcommand{\RR}{{R}}
\newcommand{\Z}{{\mathbb Z}}
\newcommand{\be}{\begin{eqnarray}}
\newcommand{\bea}{\begin{eqnarray*}}
\newcommand{\ee}{\end{eqnarray}}
\newcommand{\eea}{\end{eqnarray*}}
\newcommand{\gap}{\mathrm{Gap}}
\newcommand{\frob}{g}
\newcommand{\ffrob}{f}
\renewcommand{\mod}{\;\mathrm{mod}\;}
\newcommand{\ve}{\boldsymbol}
\newcommand{\A}{A}
\newcommand{\modulo}{\,\mathrm{mod}\;}
\newcommand{\sign}{\mathrm{sign}}
\newcommand{\idist}{{d}}
\begin{document}

\title{Distances to Lattice Points in Knapsack Polyhedra \thanks{Earlier proceeding version: Iskander Aliev, Martin Henk and Timm Oertel, {\em Integrality Gaps of Integer Knapsack Problems}, Integer Programming and Combinatorial Optimization, Lecture Notes in Computer Science, {\bf 10328} (2017), 25--38.}}

\author{Iskander Aliev\inst{1} \and Martin Henk\inst{2} \and Timm Oertel\inst{3}}
\institute{Mathematics Institute, Cardiff University, UK\\
\email{alievi@cardiff.ac.uk}
\and
Department of Mathematics, TU Berlin, Germany\\
\email{henk@math.tu-berlin.de}\\
\and
Mathematics Institute, Cardiff University, UK\\
\email{oertelt@cardiff.ac.uk}}

\date{\today}

\maketitle

\begin{abstract}
We give an optimal upper bound for the $\ell_{\infty}$-distance from a vertex of a knapsack polyhedron to its nearest feasible lattice point. In a randomised setting, we show that the upper bound can be significantly improved on average. As a corollary, we obtain an optimal upper bound for the additive integrality gap of integer knapsack problems and show that the integrality gap of  a ``typical'' knapsack problem is drastically smaller than the integrality gap that occurs in a worst case scenario. We also prove that, in a generic case, the integer programming gap admits a natural optimal lower bound.

\end{abstract}

\section{Introduction}

%
%

Given  ${\ve a}\in \Z^n$, $b\in \Z$, a {\em knapsack polyhedron} $P({\ve a}, b)$  is defined as
\bea
P({\ve a}, b)=\{{\ve x}\in \R^n_{\ge 0}: {\ve a}^T {\ve x}=b\}\,.
\eea
We will estimate the $\ell_\infty$-distance from a vertex of $P({\ve a}, b)$ to the set of its lattice points. 
For this purpose we define the {\em (maximum) vertex distance}
\bea
\idist({\ve a},b)=
\left\{
\begin{array}{rl}
\max_{{\ve v}}\min_{{\ve z}\in P({\ve a},b)\cap \Z^n} \|{\ve v}-{\ve z}\|_{\infty}\,, & \mbox{ if } P({\ve a},b)\cap \Z^n\neq \emptyset\,,\\
-\infty\,, & \mbox{ otherwise}\,,
\end{array}
\right.
\eea
where $\|{\cdot}\|_{\infty}$ stands for the $\ell_\infty$-norm and the maximum is taken over all vertices ${\ve v}$ of the polyhedron $P({\ve a},b)$.

We will exclude the trivial case $n=1$, where the vertex distance takes the values $0$ and $-\infty$ only. We may also assume without loss of generality
that ${\ve a}$ is a primitive integer vector with nonzero entries.
Thus, we will assume the following conditions:
\be \label{nonzeroA}
\begin{array}{ll}
(i) & {\ve a}=(a_1, \ldots, a_n)^T\in \Z^{ n}\,,n\ge 2\,, a_i\neq 0\,, i=1,\ldots,n\,,\\
(ii) & \gcd({\ve a}):=\gcd(a_1, \ldots, a_n)=1\,.
\end{array}
\ee

The first result of this paper gives an optimal upper bound for the vertex distance that depends only on the $\ell_\infty$-norm of the vector ${\ve a}$ and independent of $n$ and $b$.
\begin{thm}\label{thm_bounds_for_distance}
\begin{itemize}
\item[(i)]
Let ${\ve a}$ satisfy (\ref{nonzeroA}) and $b\in \Z$. Then
\be\label{thm_upper}
\idist({\ve a},b)\le\|{\ve a}\|_\infty-1\,.
\ee
\item[(ii)]
For any positive integer $k$ and any dimension $n$ there exist ${\ve a}$ satisfying (\ref{nonzeroA}) with $\|{\ve a}\|_{\infty}=k$  and $b\in \Z$ such that
\bea
\idist({\ve a},b)=\|{\ve a}\|_\infty-1\,.
\eea
\end{itemize}
\end{thm}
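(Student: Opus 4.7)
My plan is to handle part (ii) first by an explicit construction, and then to tackle part (i) via a reduction to a modular-residue problem on $(z_2,\ldots,z_n)$. For part (ii), I would take $\ve a = (k, k, \ldots, k, 1)^T \in \Z^n$ (with $n-1$ copies of $k$ followed by a $1$) and $b = k - 1$. This $\ve a$ satisfies (\ref{nonzeroA}) and has $\|\ve a\|_\infty = k$. Since the first $n-1$ coefficients all exceed $b$, the equation $k(z_1 + \cdots + z_{n-1}) + z_n = k-1$ forces $z_1 = \cdots = z_{n-1} = 0$ and $z_n = k-1$, so $P(\ve a, b) \cap \Z^n = \{(k-1)\ve e_n\}$. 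The vertex $\ve v = ((k-1)/k)\,\ve e_1$ is then at $\ell_\infty$-distance $\max\{(k-1)/k,\, k-1\} = k-1$ from this unique feasible lattice point, giving $\idist(\ve a, b) = k - 1 = \|\ve a\|_\infty - 1$.

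For part (i), I would first observe that any vertex of $P(\ve a, b)$ has at most one nonzero coordinate: as $P(\ve a, b)$ is cut out by one equation and $n$ nonnegativity inequalities, a vertex must make $n-1$ of the latter tight. So every vertex takes the form $\ve v = (b/a_i)\,\ve e_i$ with $b/a_i \ge 0$. After permuting coordinates and flipping signs of $\ve a$ and $b$, I may fix $\ve v = (b/a_1)\,\ve e_1$ with $a_1 > 0$, $b \ge 0$, and set $k = \|\ve a\|_\infty$. The task is to exhibit $\ve z \in P(\ve a, b) \cap \Z^n$ with $0 \le z_j \le k-1$ for $j \ge 2$ and $|z_1 - b/a_1| \le k-1$. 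Writing $z_1 = (b - \sum_{j \ge 2} a_j z_j)/a_1$, this reduces to finding $(z_2, \ldots, z_n) \in \{0, \ldots, k-1\}^{n-1}$ with $\sum_{j\ge 2} a_j z_j \equiv b \pmod{a_1}$, with $|\sum_{j\ge 2} a_j z_j| \le (k-1)a_1$, and with $\sum_{j\ge 2} a_j z_j \le b$ (the last yielding $z_1 \ge 0$).

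To establish existence I would proceed in two stages. The congruence stage is easier: since $\gcd(\ve a) = 1$ and $|a_1| \le k$, the image of the map $(z_2, \ldots, z_n) \mapsto \sum_{j\ge 2} a_j z_j \pmod{a_1}$ restricted to $[0, |a_1|-1]^{n-1}$ already equals $\Z/a_1\Z$, so some admissible tuple achieves the residue $b \pmod{a_1}$. The harder stage is squeezing the sum into the window $|\sum_{j\ge 2} a_j z_j| \le (k-1) a_1$; my plan is an exchange argument using the circuits $a_1 \ve e_j - a_j \ve e_1 \in \ker \ve a$ (which reduce each $z_j$ to $[0, |a_1|-1]$ at the price of shifting $z_1$) together with the circuits $a_i \ve e_j - a_j \ve e_i$ for $i, j \ge 2$ (which flatten the sum without changing the residue), bootstrapped by the hypothesis $P(\ve a, b) \cap \Z^n \ne \emptyset$.

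The principal obstacle will be exactly this second stage. A naive triangle inequality bounds $|\sum_{j \ge 2} a_j z_j|$ only by $(k-1)\sum_{j\ge 2} |a_j|$, which can exceed the target $(k-1) a_1$ by a factor that grows with $n$. Sharpening this to $(k-1)a_1$ — and doing so while simultaneously maintaining $\sum_{j\ge 2} a_j z_j \le b$ so that $z_1 \ge 0$ even when $b$ is small — is where the tightness of the constant $\|\ve a\|_\infty - 1$ really enters, and will be the technical heart of the upper bound.
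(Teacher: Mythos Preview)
Your construction for part (ii) is exactly the one used in the paper and is correct.

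For part (i), your reduction to the search for $(z_2,\ldots,z_n)\in\{0,\ldots,k-1\}^{n-1}$ satisfying the congruence, the bound $|\sum_{j\ge 2}a_jz_j|\le (k-1)a_1$, and $\sum_{j\ge 2}a_jz_j\le b$ is correct, but the proposed mechanism for the ``second stage'' is flawed. The circuits $a_i\ve e_j-a_j\ve e_i$ with $i,j\ge 2$ \emph{do not change} $S=\sum_{\ell\ge 2}a_\ell z_\ell$ at all (the contribution $a_j\cdot a_i+a_i\cdot(-a_j)$ vanishes), so they cannot ``flatten the sum'' as you claim; they only redistribute the $z_j$'s within the same level set of $S$. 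The circuits $a_1\ve e_j-a_j\ve e_1$ do change $S$, but only in multiples of $a_1a_j$, which gives you no finer control than you already had. So the toolkit you have assembled cannot bridge the gap between the trivial bound $(k-1)\sum_{j\ge 2}|a_j|$ and the target $(k-1)a_1$; you correctly identified this as the heart of the matter, but the plan does not address it. A second structural issue is that you fixed the vertex coordinate $a_1$ arbitrarily, whereas getting the sharp constant requires exploiting the \emph{smallest} entry of $\ve a$ in a special role.

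The paper's argument is genuinely different. For $\ve a\in\Z^n_{>0}$ it bounds $\idist(\ve a,b)$ via the Frobenius number using Kannan's identity $\mu(S_{\ve a},\Lambda_{\ve a};\Z^{n-1})=\frob(\ve a)+a_n$ together with a covering argument, and then invokes Schur's bound $\frob(\ve a)\le(\min_i a_i)\|\ve a\|_\infty-\min_i a_i-\|\ve a\|_\infty$ to obtain $\|\ve a\|_\infty-1$. For mixed signs it first permutes so that $a_n=\min_i|a_i|>0$, uses the discrete covering fact that $(\det\Lambda-1)S^{d}_{\ve 1}+\Lambda$ covers $\Z^d$ for any full-rank sublattice $\Lambda\subset\Z^d$ (the simplex replacing the naive cube is what defeats the factor-of-$n$ loss), and then proceeds by induction on $n$, projecting away the $n$th coordinate and passing to $P(\pi_n(\ve a),t)$ for a carefully chosen $t$. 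The key ideas you are missing are this simplex-covering lemma and the choice to eliminate the coordinate of smallest absolute value; without them I do not see how your exchange scheme can be completed.
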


Note that the classical sensitivity theorem of Cook et al. \cite[Theorem 1]{Cook} implies
in the knapsack setting the bound
$\idist({\ve a},b) \le n \|{\ve a}\|_\infty.$
Let $A=(a_{ij})\in\Z^{m \times n}$, ${\ve b}\in\Z^m$ and let $\|\cdot\|_1$ denote the $l_1$-norm. A very recent strong improvement on the results of Cook et al. \cite{Cook} obtained by  Eisenbrand and Weismantel \cite{EW} implies that to every vertex ${\ve v}$ of the polyhedron $P=\{{\ve x}\in \R^n_{\ge 0}: A{\ve x}={\ve b}\}$ there exists an integer point ${\ve z}$ in $P$ (provided it is integer feasible), such that
\be\label{EW_implies}
\|{\ve v}-{\ve z}\|_1\le m(2m\|A\|_\infty+1)^m,
\ee
where $\|A\|_\infty=\max_{i,j} |a_{ij}|$.
It remains an open question how tight this bound is.
For a bounded knapsack polyhedron the bound (\ref{EW_implies}) can be strengthened as follows. In the proof of Theorem \ref{thm_bounds_for_distance} (i) we estimate the vertex distance using a covering argument that guarantees for any vertex ${\ve v}$ of a bounded polyhedron $P({\ve a},b)$ existence of a lattice point ${\ve z}\in P({\ve a},b)$ in an $(n-1)$-dimensional simplex of sufficiently small diameter, translated by ${\ve v}$.
The argument implies the bound
\bea 
\min_{{\ve z}\in P({\ve a},b)\cap \Z^n} \|{\ve v}-{\ve z}\|_1\le  2 (\|{\ve a}\|_{\infty}-1)\,.
\eea



How large is the  vertex distance of a ``typical'' knapsack polyhedron? Specifically, consider for $H\ge 1$
the set ${\NC}(H)$ of ${\ve a}\in \Z^{ n}$ that satisfy (\ref{nonzeroA}) and
\bea \|{\ve a}\|_{\infty}\le H\,.\eea
The next theorem will estimate the proportion of the vectors ${\ve a}$ in ${\NC}(H)$ such that for some $b\in\Z$ the knapsack polyhedron
$P({\ve a}, b)$ has relatively large vertex distance.
Let
$N(H)$ be the cardinality of ${\NC}(H)$. For $\epsilon\in (0,3/4)$ let
\bea
 N_{\epsilon}(t,H)=\#\left\{{\ve a}\in {\NC}(H): \max_{b\in\Z}\frac{\idist({\ve a},b)}{\|{\ve a}\|_{\infty}^{\epsilon}}>t \right\}\,.
\eea
In the rest of the paper, the notation  $f({\ve x})\ll_n g({\ve x})$ for ${\ve x}\in S$, where $S$ is a set, means that $|f({\ve x})|\le c|g({\ve x})|$, for ${\ve x}\in S$ and a positive constant $c=c(n)$ depending on $n$ only.  $f({\ve x})\asymp_n g({\ve x})$ means that both $f({\ve x})\ll_n g({\ve x})$, $g({\ve x})\ll_n f({\ve x})$ hold.

\begin{thm}\label{Ratio} Fix $n\ge 3$. For any $\epsilon\in (0,3/4)$ we have
\be\label{main_bound}
\frac{N_{\epsilon}(t,H)}{N(H)}\ll_n t^{-\alpha(\epsilon,n)}
\ee
uniformly over all $t>0$ and $H\ge 1$. Here
\bea
\alpha(\epsilon,n)=\frac{n-2}{(1-\epsilon)n}\,.
\eea
\end{thm}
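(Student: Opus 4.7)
The plan is to bound $d(\ve a,b)$ by a lattice-geometric invariant of $\ve a$ and then count how many $\ve a\in\NC(H)$ make this invariant large. To $\ve a$ satisfying (\ref{nonzeroA}) associate the $(n-1)$-dimensional lattice $\Lambda_{\ve a}=\{\ve x\in\Z^n:\ve a^T\ve x=0\}$, of determinant $\|\ve a\|_2$. Any two integer feasible points of $P(\ve a,b)$ differ by an element of $\Lambda_{\ve a}$, and Theorem~\ref{thm_bounds_for_distance}~(i) already supplies a reference integer feasible point $\ve z_0$ within $\ell_\infty$-distance $\|\ve a\|_\infty-1$ of every vertex $\ve v=(b/a_i)\ve e_i$. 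I would refine this and show that, up to a factor depending only on $n$, the vertex distance is controlled by the last successive minimum $\mu_{n-1}(\Lambda_{\ve a},\|\cdot\|_\infty)$: given $n-1$ linearly independent vectors in $\Lambda_{\ve a}$ of $\ell_\infty$-norm at most $\mu_{n-1}$, one can correct $\ve z_0$ toward $\ve v$ coordinate by coordinate, the nonnegativity constraint being harmless when the nonzero vertex coordinate $b/a_i$ dominates $\mu_{n-1}$ and otherwise handled by Theorem~\ref{thm_bounds_for_distance}~(i) itself.

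This reduces the problem to counting $\ve a\in\NC(H)$ with $\mu_{n-1}(\Lambda_{\ve a},\|\cdot\|_\infty)\gg_n tH^\epsilon$. By Minkowski's second theorem, $\mu_1\cdots\mu_{n-1}\asymp_n\|\ve a\|_2\ll_n H$, so this lower bound on $\mu_{n-1}$ forces $\mu_1\cdots\mu_{n-2}\ll_n H^{1-\epsilon}/t$. Consequently, $\Lambda_{\ve a}$ contains a primitive $(n-2)$-dimensional sublattice $M\subset\Z^n$ with determinant at most $D:=c_n H^{1-\epsilon}/t$, and $\ve a$ lies in the two-dimensional real orthogonal complement of $M$. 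I would then invoke the Schmidt-type estimate that the number of primitive $(n-2)$-dimensional sublattices of $\Z^n$ of determinant at most $D$ is $\ll_n D^n$, together with the fact that for each such $M$ the lattice of integer points orthogonal to $M$ is two-dimensional of determinant $\det(M)$, hence the number of $\ve a\in\NC(H)$ orthogonal to $M$ is $\ll H^2/\det(M)$. Summing (via Abel summation) over sublattices with $\det(M)\le D$ is designed to deliver the desired estimate of the shape $N_\epsilon(t,H)\ll_n H^n\cdot t^{-\alpha(\epsilon,n)}$.

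The main obstacle will be extracting the precise exponent $\alpha(\epsilon,n)=(n-2)/((1-\epsilon)n)$ uniformly over $H\ge 1$, especially in the regime $\epsilon<1/(n-1)$. There the generic scale $H^{1/(n-1)}$ of $\mu_{n-1}$ already exceeds the target threshold $tH^\epsilon$, so the sublattice count above becomes vacuous on its own; I would expect to close the gap by splitting $\NC(H)$ according to the size $|b/a_i|$ of the nonzero vertex coordinate and using Theorem~\ref{thm_bounds_for_distance}~(i) when this coordinate is small, or alternatively by applying a Markov-type inequality to a moment of $\max_b d(\ve a,b)$ calibrated to $\alpha(\epsilon,n)$. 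Balancing the exponents produced by the sublattice count against those produced by this auxiliary estimate is where the exact formula for $\alpha(\epsilon,n)$, as well as the restriction $\epsilon<3/4$, should originate.
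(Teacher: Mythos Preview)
Your route is genuinely different from the paper's and, as written, has real gaps.

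The paper never touches successive minima or Schmidt's sublattice count. Instead it bounds $d(\ve a,b)$ via Frobenius numbers: for positive $\ve a$ one has $d(\ve a,b)\le(\frob(\ve a)+\|\ve a\|_\infty)/\min_i a_i$, and an analogous bound with $\ffrob(\ve a^+)=\frob(\ve a^+)+\sum_i|a_i|$ in the numerator is proved for unbounded $P(\ve a,b)$. After ordering $0<a_1\le\cdots\le a_n$ the key step is a two-term splitting via a free parameter $t'\in[1,t]$: either $\ffrob(\ve a)/(a_{n-1}a_n^{1/(n-1)})>t'/(n-1)$, handled by Str\"ombergsson's tail bound for normalised Frobenius numbers (contribution $\ll(t')^{-(n-1)}$), or $a_{n-1}/(a_1 a_n^{\epsilon-1/(n-1)})>t/t'$, handled by an elementary count on small $a_1$. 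Schur's bound forces $H\gg_n t^{1/(1-\epsilon)}$ whenever $N_\epsilon(t,H)>0$, which converts the $H$-dependence of the second term into a pure power of $t$; optimising $t'=t^\beta$ then yields exactly $\alpha(\epsilon,n)$, and the constraint $t'\le t$ is what limits $\epsilon$ from above.

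Your proposal misses this mechanism in two places. First, the reduction $d(\ve a,b)\ll_n\mu_{n-1}(\Lambda_{\ve a},\|\cdot\|_\infty)$ is plausible but your sketch does not prove it: having $n-1$ short independent kernel vectors lets you find an integer point on the hyperplane within $\ell_\infty$-distance $O_n(\mu_{n-1})$ of the vertex, but gives no control on nonnegativity in the $n-1$ coordinates that vanish at $\ve v$; your ``harmless nonnegativity'' remark covers only the single nonzero coordinate. What is actually needed is a bound on the covering radius of the \emph{simplex} $P(\ve a,b)-\ve v$, and tying that to $\mu_{n-1}$ already amounts to the Kannan--Frobenius connection the paper exploits. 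Second, carry your Abel summation through: $\sum_{\det M\le D}H^2/\det M\asymp_n H^2D^{n-1}$ with $D\asymp_n H^{1-\epsilon}/t$, so your count gives $N_\epsilon(t,H)/N(H)\ll_n H^{1-(n-1)\epsilon}\,t^{-(n-1)}$. For $\epsilon\ge 1/(n-1)$ this is indeed stronger than the claimed $t^{-\alpha}$, but for $\epsilon<1/(n-1)$ it is not uniform in $H$, and neither of your suggested fixes (splitting by $|b/a_i|$, or a Markov inequality on a moment of $\max_b d(\ve a,b)$) supplies the second small parameter that the paper's $t'$ provides. The exponent $\alpha(\epsilon,n)$ arises precisely from \emph{balancing} the Str\"ombergsson tail against the small-$a_1$ count, an optimisation that has no counterpart in a single-invariant approach.
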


To prove Theorem \ref{Ratio}, we will utilize results of Str\"ombergsson \cite{Str} (see also Schmidt \cite{Schmidt} and references therein) on the asymptotic distribution of Frobenius numbers.

Theorems  \ref{thm_bounds_for_distance} and \ref{Ratio} can be applied to estimating the (additive) integrality gaps
 for integer knapsack problems. In the proceedings \cite{AHO} the authors have considered this problem for the case that $\ve a$ is non-negative. In this paper we extend those results to greater generality.
In particular, in Corollary~\ref{coro_upper_bound} and Corollary~\ref{average}, we show that the two main statements of \cite{AHO} hold true for general knapsack polyhedra, i.e., we drop the non-negativity assumption. We remark that extending results of \cite{AHO} to general knapsack polyhedra required using new covering arguments in the proofs of Theorems \ref{thm_bounds_for_distance} and \ref{Ratio}.
Also, we include in this paper proofs that were omitted in \cite{AHO}.

 Given  ${\ve a}\in \Z^n$, $b\in \Z$ and a cost vector ${\ve c}\in \Q^n$, we will consider the integer knapsack problem
 \be
\min\{ {\ve c} ^T{\ve x}: {\ve x}\in P({\ve a},b)\cap \Z^n\}\,.
\label{initial_IP}
\ee
We will assume that (\ref{initial_IP}) is feasible and bounded.

Let $IP({\ve c}, {\ve a}, {b})$ and $LP({\ve c}, {\ve a}, { b})$ denote the optimal values of (\ref{initial_IP}) and its linear programming relaxation
\be\label{initial_LP}
\min\{ {\ve c}^T {\ve x}: {\ve x}\in P({\ve a},b)\}\,,
\ee
 respectively.
The {\em integrality gap } $IG({\ve c}, {\ve a}, {b})$ of  (\ref{initial_IP}) is defined as
\bea IG({\ve c}, {\ve a}, {b})= IP({\ve c}, {\ve a}, { b})-LP({\ve c}, {\ve a}, { b})\,.\eea
Notice that
\be\label{gap-distance}
IG({\ve c}, {\ve a}, {b})\le \idist({\ve a},b)\|{\ve c}\|_1\,.
\ee
%
%
Given a pair $({\ve c}, {\ve a})$, the maximum of $IG({\ve c}, {\ve a}, {b})$ over all suitable ${b}$ is referred to as the {\em integer programming gap} (Ho\c{s}ten and Sturmfels \cite{HS})
\bea
\gap({\ve c}, {\ve a})= \max_{{b}} IG({\ve c}, {\ve a}, {b})\,.
\eea
Here ${ b}$ ranges over all integers such that (\ref{initial_IP}) is feasible and bounded. Notice that computing $\gap({\ve c}, {\ve a})$ when $n$ is a part of input is NP-hard (see Aliev \cite{lpg} and Eisenbrand et al \cite{EHPS}). For any fixed $n$, the integer programming gap  can be computed in polynomial time due to results of Ho\c{s}ten and Sturmfels \cite{HS} (see also Eisenbrand and Shmonin \cite{ES}).

As a corollary of Theorem \ref{thm_bounds_for_distance}, we obtain the following optimal upper bound on the integer programming gap.

\begin{corollary}\label{coro_upper_bound}\hfill \begin{itemize}

\item[(i)] Let ${\ve a}$ satisfy (\ref{nonzeroA}) and let ${\ve c}\in \Q^n$. Then
\be\label{coro_upper}
\gap({\ve c}, {\ve a})\le \left( \|{\ve a}\|_{\infty}-1 \right)\|{\ve c}\|_1\,.
\ee
\item[(ii)] For any positive integer $k$ and any dimension $n$ there exist ${\ve a}$ satisfying (\ref{nonzeroA}) with $\|{\ve a}\|_{\infty}=k$ and ${\ve c}\in \Q^n$ such that
\bea
\gap({\ve c}, {\ve a})= \left( \|{\ve a}\|_{\infty}-1 \right)\|{\ve c}\|_1\,.
\eea

\end{itemize}
\end{corollary}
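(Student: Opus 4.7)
For part (i), the plan is a one-line combination of Theorem \ref{thm_bounds_for_distance}(i) and the already-noted inequality (\ref{gap-distance}). For every integer $b$ for which (\ref{initial_IP}) is feasible and bounded, the set $P(\ve{a},b)\cap\Z^n$ is nonempty, so Theorem \ref{thm_bounds_for_distance}(i) gives $\idist(\ve{a},b)\le\|\ve{a}\|_\infty-1$, and (\ref{gap-distance}) then yields $IG(\ve{c},\ve{a},b)\le(\|\ve{a}\|_\infty-1)\|\ve{c}\|_1$. Taking the maximum over all admissible $b$ produces (\ref{coro_upper}).

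For part (ii), the only real task is to exhibit a tight example. The case $\|\ve{a}\|_\infty=1$ is vacuous: any primitive $\ve{a}\in\{-1,+1\}^n$ gives a polyhedron whose vertices all lie in $\Z^n$, so $IG(\ve{c},\ve{a},b)=0$ for every feasible $b$ and every $\ve{c}$, matching $(\|\ve{a}\|_\infty-1)\|\ve{c}\|_1=0$. For $k\ge 2$ and any $n\ge 2$, I propose
\[
\ve{a}=(\underbrace{k,\ldots,k}_{n-1},\,k-1)^T,\qquad b=(k-1)^2,\qquad \ve{c}=\ve{e}_n,
\]
where $\ve{e}_n$ denotes the $n$th standard basis vector. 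Since $\gcd(k,k-1)=1$, the vector $\ve{a}$ satisfies (\ref{nonzeroA}), and $\|\ve{a}\|_\infty=k$, $\|\ve{c}\|_1=1$.

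The verification proceeds in two steps. First, the point $\tfrac{(k-1)^2}{k}\,\ve{e}_1$ is a vertex of $P(\ve{a},b)$ with $x_n=0$, so $LP(\ve{c},\ve{a},b)=0$. Second, reducing the defining equation $k(x_1+\cdots+x_{n-1})+(k-1)x_n=(k-1)^2$ modulo $k$ forces $x_n\equiv k-1\pmod k$; plugging in $x_n=k-1$ yields the single integer feasible point $(0,\ldots,0,k-1)$, while any $x_n\ge 2k-1$ already overshoots $b$ since $\ve{x}\ge\ve{0}$. Hence $IP(\ve{c},\ve{a},b)=k-1$, so $IG(\ve{c},\ve{a},b)=k-1$; combined with part (i) this forces $\gap(\ve{c},\ve{a})=k-1=(\|\ve{a}\|_\infty-1)\|\ve{c}\|_1$. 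The main difficulty is non-technical: it is finding a construction that saturates the distance bound of Theorem \ref{thm_bounds_for_distance} and the $\ell_\infty$--$\ell_1$ duality in (\ref{gap-distance}) simultaneously. The displayed choice succeeds because the $\ell_\infty$-norm of the difference between the unique integer feasible point and the LP-minimizing vertex is realized in the $n$th coordinate, which is exactly what $\ve{c}=\ve{e}_n$ picks out.
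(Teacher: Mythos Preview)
Your proof is correct and follows essentially the same approach as the paper: part (i) is exactly the paper's argument (Theorem \ref{thm_bounds_for_distance}(i) plus (\ref{gap-distance})), and part (ii) proceeds, as in the paper, by exhibiting an explicit tight instance with $\ve c=\ve e_n$. The only difference is the choice of example: the paper reuses $\ve a=(k,\ldots,k,1)^T$, $b=k-1$ from Theorem \ref{thm_bounds_for_distance}(ii), which is marginally simpler (the unique integer point $(0,\ldots,0,k-1)$ is immediate without a congruence argument) and handles $k=1$ uniformly rather than as a separate trivial case.
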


From (\ref{main_bound}) one can derive an upper  bound on the average
value of the (normalised) integer programming gap. The next corollary will show that for any $\epsilon>2/n$
the ratio
\bea
\frac{\gap({\ve c}, {\ve a})}{\|{\ve a}\|_{\infty}^{\epsilon}\|{\ve c}\|_1}
\eea
is bounded, on average, by a constant that depends only on dimension $n$.
Hence, for fixed $n>2$ and a ``typical'' integer knapsack problem with large $\|{\ve a}\|_{\infty}$,
its linear programming relaxation provides a drastically better approximation (roughly of order $2/n$) to the solution than in the worst case scenario, determined by the optimal upper bound (\ref{coro_upper}).

\begin{corollary} \label{average} Fix $n\geq 3$. For any $\epsilon>2/n$
\be\label{average_ineq}
\frac{1}{N(H)}\sum_{{\ve a}\in \NC(H)}\max_{{\ve c}\in \Q^n}\frac{\gap({\ve c}, {\ve a})}{\|{\ve a}\|_{\infty}^{\epsilon}\|{\ve c}\|_1} \ll_n 1\,.
\ee
\end{corollary}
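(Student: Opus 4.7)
\medskip
\noindent\textbf{Proof plan.} The plan is to deduce the corollary from Theorem~\ref{Ratio} by first turning the integer programming gap into a vertex-distance statement via~(\ref{gap-distance}), and then invoking a standard layer-cake argument. If $b^{\ast}=b^{\ast}(\ve c,\ve a)$ achieves the maximum in $\gap(\ve c,\ve a)$, then~(\ref{gap-distance}) yields
\[
\frac{\gap(\ve c,\ve a)}{\|\ve a\|_{\infty}^{\epsilon}\|\ve c\|_{1}}=\frac{IG(\ve c,\ve a,b^{\ast})}{\|\ve a\|_{\infty}^{\epsilon}\|\ve c\|_{1}}\le\frac{\idist(\ve a,b^{\ast})}{\|\ve a\|_{\infty}^{\epsilon}}\le F(\ve a),
\]
where $F(\ve a):=\max_{b\in\Z}\idist(\ve a,b)/\|\ve a\|_{\infty}^{\epsilon}$. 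Since $F(\ve a)$ is independent of $\ve c$, it is enough to bound $N(H)^{-1}\sum_{\ve a\in\NC(H)}F(\ve a)$. Moreover, as each $\ve a\in\NC(H)$ satisfies $\|\ve a\|_{\infty}\ge 1$, the quantity $F(\ve a)$ is non-increasing in $\epsilon$, so I may restrict attention to $\epsilon\in(2/n,3/4)$; this is a non-empty range since $2/n\le 2/3<3/4$ for $n\ge 3$, and on it Theorem~\ref{Ratio} applies directly.

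Second, I would apply the tail-sum identity
\[
\frac{1}{N(H)}\sum_{\ve a\in\NC(H)}F(\ve a)=\int_{0}^{\infty}\frac{N_{\epsilon}(t,H)}{N(H)}\,dt
\]
and split at a threshold $t_{0}=t_{0}(n,\epsilon)$, using the trivial bound $N_{\epsilon}/N\le 1$ on $[0,t_{0}]$ and Theorem~\ref{Ratio} on $(t_{0},\infty)$ to obtain $N_{\epsilon}(t,H)/N(H)\ll_{n} t^{-\alpha(\epsilon,n)}$. The tail integral $\int_{t_{0}}^{\infty}t^{-\alpha(\epsilon,n)}\,dt$ converges precisely when $\alpha(\epsilon,n)>1$, and a direct rearrangement of $(n-2)/((1-\epsilon)n)>1$ recovers exactly the hypothesis $\epsilon>2/n$. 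Choosing $t_{0}$ so that the two parts of the split balance then yields a bound of order $O_{n}(1)$.

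The only genuine input is Theorem~\ref{Ratio} itself; given that estimate, the rest is a few lines of Fubini and a cutoff, so there is no real obstacle here. The one point worth emphasising is the exact matching between the convergence condition $\alpha(\epsilon,n)>1$ of the tail integral and the stated threshold $\epsilon>2/n$; the mild discrepancy between this range and the restriction $\epsilon<3/4$ in Theorem~\ref{Ratio} is absorbed by the monotonicity of $F$ in $\epsilon$.
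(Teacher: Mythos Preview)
Your proposal is correct and follows essentially the same route as the paper: reduce the gap to the vertex distance via~(\ref{gap-distance}), restrict to $\epsilon\in(2/n,3/4)$ by monotonicity, and then use the tail bound of Theorem~\ref{Ratio} together with the fact that $\alpha(\epsilon,n)>1$ on that range. The only cosmetic difference is that the paper sums over dyadic shells $e^{s-1}\le F(\ve a)<e^{s}$ while you use the continuous layer-cake integral; these are equivalent ways of executing the same tail-sum argument.
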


The last two theorems of this paper give lower bounds for the integer programming gap and its average value. In particular, Theorem \ref{lower_upper} shows that the bound in Corollary \ref{average} is not far from being optimal.

Let ${\ve a}\in \Z^n_{>0}$ satisfy \eqref{nonzeroA} and let ${\ve c}\in \Q^n$. We will say that  $({\ve a}, {\ve c})$ is {\em generic} if for any positive $b \in \Z$
the linear programming relaxation (\ref{initial_LP}) has a unique optimal solution.
In this setting, an optimal lower bound for $\gap({\ve c}, {\ve a})$ 
can be obtained
using recent results \cite{lpg}  on the {\em lattice programming gaps} associated with the group relaxations to (\ref{initial_IP}).
%
%
For a generic $({\ve a},{\ve c})$, let $\tau=\tau({\ve a}, {\ve c})$ be the unique index of the basic variable for the optimal solution to the linear relaxation (\ref{initial_LP}) with a positive $b\in \Z$. Let $\pi_i(\cdot): \R^n\rightarrow \R^{n-1}$ be the projection that forgets the $i$th coordinate
%
and let ${\ve l}({\ve a}, {\ve c})=\pi_{\tau}({\ve c})-{c}_{\tau}{a}_{\tau}^{-1}\pi_{\tau}({\ve a})$.
Note that ${\ve l}$ corresponds to the dual slack.

Let $\rho_{d}$ denote the {\em covering constant} of the standard $d$-dimensional simplex, defined in Section \ref{geometry}.

\begin{thm}\hfill
\begin{itemize}
\item[(i)] Let ${\ve a}\in \Z^n_{>0}$ satisfy (\ref{nonzeroA}) and let ${\ve c}\in \Q^n$. Suppose that $({\ve a}, {\ve c})$ is generic. Then for $\tau=\tau({\ve a}, {\ve c})$ and ${\ve l}={\ve l}({\ve a}, {\ve c})$ we have
\be\begin{split}
\gap({\ve c}, {\ve a})\ge \rho_{{n-1}} ({a}_{\tau} l_1\cdots l_{n-1})^{1/{(n-1)}}-\|{\ve l}\|_1\,.
\label{optimal_bound}
\end{split}
\ee
\item[(ii)] For any $\epsilon>0$, there exists a vector ${\ve a}\in \Z^n_{>0}$, satisfying (\ref{nonzeroA}) and ${\ve c}\in \Q^n$ such that $({\ve a}, {\ve c})$ is generic and, in the notation of part (i), we have
\bea
\gap({\ve c}, {\ve a})< (\rho_{{n-1}}+\epsilon) ({a}_\tau l_1\cdots l_{n-1})^{1/{(n-1)}}-\|{\ve l}\|_1\,.
\eea
\end{itemize}
\label{thm_optimal_bound}
\end{thm}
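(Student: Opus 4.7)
The plan is to reduce the computation of $\gap({\ve c},{\ve a})$ to the lattice programming gap studied in \cite{lpg}, and then invoke the optimal bounds established there. First I would exploit the genericity hypothesis: since the LP relaxation has a unique optimum for every positive $b\in\Z$ and ${\ve a}>0$, that optimum is the vertex with $x_\tau=b/a_\tau$ and the remaining coordinates equal to zero, and the reduced cost vector is precisely ${\ve l}$, which must be componentwise strictly positive. Eliminating $x_\tau$ from the equality $a_\tau x_\tau+\pi_\tau({\ve a})^T{\ve y}=b$, where ${\ve y}=\pi_\tau({\ve x})$, and substituting into the objective shows that
\begin{equation*}
IG({\ve c},{\ve a},b)=\min\bigl\{\,{\ve l}^T{\ve y}:{\ve y}\in\Z^{n-1}_{\ge 0},\ \pi_\tau({\ve a})^T{\ve y}\equiv b\modulo a_\tau,\ \pi_\tau({\ve a})^T{\ve y}\le b\,\bigr\}.
\end{equation*}

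Next I would note that, fixing a residue $r=b\modulo a_\tau$, the inequality $\pi_\tau({\ve a})^T{\ve y}\le b$ becomes inactive once $b$ is sufficiently large, so that
\begin{equation*}
\gap({\ve c},{\ve a})=\max_{0\le r<a_\tau}\min\bigl\{\,{\ve l}^T{\ve y}:{\ve y}\in\Z^{n-1}_{\ge 0},\ \pi_\tau({\ve a})^T{\ve y}\equiv r\modulo a_\tau\,\bigr\}.
\end{equation*}
The right-hand side is exactly the lattice programming gap $\gap(L,{\ve l})$ associated with the $(n-1)$-dimensional lattice $L=\{{\ve y}\in\Z^{n-1}:\pi_\tau({\ve a})^T{\ve y}\equiv 0\modulo a_\tau\}$, whose determinant equals $a_\tau$ thanks to the primitivity condition $\gcd({\ve a})=1$.

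Part (i) now follows at once from the optimal lower bound for the lattice programming gap proved in \cite{lpg}, namely $\gap(L,{\ve l})\ge\rho_d(\det(L)\,l_1\cdots l_d)^{1/d}-\|{\ve l}\|_1$ for any full-rank lattice $L\subset\R^d$ and ${\ve l}\in\R^d_{>0}$, applied with $d=n-1$ and $\det(L)=a_\tau$. For part (ii) I would invoke the complementary tightness result of \cite{lpg}, which produces, for every $\epsilon>0$, a cyclic lattice $L\subset\R^{n-1}$ and a positive vector ${\ve l}\in\R^{n-1}_{>0}$ for which $\gap(L,{\ve l})$ is within $\epsilon$ of the lower bound. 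Reading off the determinant as $a_\tau$ and a primitive lift of the defining linear form as $\pi_\tau({\ve a})$ produces an ${\ve a}\in\Z^n_{>0}$ with $\gcd({\ve a})=1$; choosing any ${\ve c}\in\Q^n$ whose dual slack at the index $\tau$ equals ${\ve l}$ completes the construction, and strict positivity of ${\ve l}$ automatically makes $({\ve a},{\ve c})$ generic.

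The main technical point I expect to require care is the reduction step: verifying that the upper-bound constraint $\pi_\tau({\ve a})^T{\ve y}\le b$ drops out when the supremum is taken over feasible $b$, and matching the resulting combinatorial object precisely with the lattice programming gap treated in \cite{lpg}. Once this identification is in place, both inequalities in the theorem are direct consequences of the previously established lattice-gap results, with part (ii) additionally requiring the short dictionary between cyclic $(n-1)$-dimensional lattices and primitive integer $n$-tuples described above.
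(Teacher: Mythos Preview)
Your reduction for part (i) is correct and matches the paper's approach: both of you pass to the group relaxation, identify the resulting problem with the lattice programming gap $\gap(\Lambda_{\ve a},{\ve l})$ for the lattice $\Lambda_{\ve a}=\{{\ve y}\in\Z^{n-1}:\pi_\tau({\ve a})^T{\ve y}\equiv 0\modulo a_\tau\}$, and then invoke Theorem~1.2(i) of \cite{lpg}. Your argument that the constraint $\pi_\tau({\ve a})^T{\ve y}\le b$ becomes inactive for large $b$ in each residue class actually gives the equality $\gap({\ve c},{\ve a})=\gap(\Lambda_{\ve a},{\ve l})$ for every generic pair, whereas the paper records only the inequality $\ge$ at this stage and proves equality separately (Lemma~\ref{Lpg_link}) for the specific cost vector it needs in part (ii).

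For part (ii) the two routes diverge. The paper does not appeal to a general tightness statement for lattice programming gaps from \cite{lpg}; instead it specialises to ${\ve c}=(a_1,\ldots,a_{n-1},0)^T$, so that ${\ve l}=\pi_n({\ve a})$, and uses the identity $\gap(\Lambda_{\ve a},{\ve l})=\frob({\ve a})+a_n$ to reduce to the Frobenius problem. Tightness then comes from \cite{AG}, which supplies vectors ${\ve a}\in\Z^n_{>0}$ with $\frob({\ve a})<(\rho_{n-1}+\epsilon)(a_1\cdots a_n)^{1/(n-1)}-\|{\ve a}\|_1$. Your proposed route---pulling a near-extremal pair $(L,{\ve l})$ from \cite{lpg} and reverse-engineering $({\ve a},{\ve c})$---is conceptually clean but has a gap you must close: the lattice $L$ you obtain must be realisable as $\Lambda_{\ve a}$, i.e.\ $\Z^{n-1}/L$ must be \emph{cyclic}, and a generic full-rank sublattice of $\Z^{n-1}$ need not be. Unless the tightness result you cite from \cite{lpg} explicitly produces cyclic lattices (you should check this), your dictionary between lattices and primitive $n$-tuples does not go through. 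The paper's detour through Frobenius numbers avoids this issue entirely, since $\Lambda_{\ve a}$ is cyclic by construction.
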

The only known exact values of $\rho_{d}$ are $\rho_1=1$ and $\rho_2=\sqrt{3}$ (see  \cite{Fary}).
It was proved in \cite{AG}, that $\rho_{d}>(d!)^{1/d}$. 
For sufficiently large $d$ this bound is not far from being optimal. Indeed, $\rho_{d}\le (d!)^{1/d}(1+O(d^{-1}\log d))$ (see \cite{DF} and \cite{MS}).

Theorem \ref{thm_optimal_bound} is the main ingredient in the proof of the last theorem of this paper that shows that the value of $\epsilon$ in (\ref{average_ineq})
cannot be smaller than $1/(n-1)$.

\begin{thm}\label{lower_upper}
Fix $n\ge 3$. For $H$ large
\bea
\frac{1}{N(H)}\sum_{{\ve a}\in \NC(H)}\max_{{\ve c}\in \Q^n}\frac{\gap({\ve c},{\ve a})}{\|{\ve a}\|_{\infty}^{1/(n-1)}\|{\ve c}\|_1} \gg_n 1\,.
\eea
\end{thm}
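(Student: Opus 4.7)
The plan is to apply Theorem \ref{thm_optimal_bound}(i) to a positive fraction of $\NC(H)$ via an essentially canonical choice of cost vector. Let $\NC^+(H)=\{{\ve a}\in \NC(H):\, a_i>0\text{ for all }i\}$, so that Theorem \ref{thm_optimal_bound} is applicable. Given ${\ve a}\in \NC^+(H)$, I pick any index $\tau=\tau({\ve a})$ with $a_\tau=\|{\ve a}\|_\infty$, and set $c_\tau=0$, $c_i=1$ for $i\ne \tau$. For every positive integer $b$, the LP relaxation (\ref{initial_LP}) attains its unique optimum (of value $0$) at the vertex $(b/a_\tau){\ve e}_\tau$, since every other vertex $(b/a_i){\ve e}_i$ yields the strictly positive value $b/a_i$. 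Thus $({\ve a},{\ve c})$ is generic, and straight from the definition ${\ve l}({\ve a},{\ve c})=\pi_\tau({\ve c})-c_\tau a_\tau^{-1}\pi_\tau({\ve a})=(1,\dots,1)\in \Z^{n-1}$.

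Substituting $a_\tau l_1\cdots l_{n-1}=\|{\ve a}\|_\infty$ and $\|{\ve l}\|_1=n-1$ into (\ref{optimal_bound}) yields
\bea
\gap({\ve c},{\ve a})\ge \rho_{n-1}\,\|{\ve a}\|_\infty^{1/(n-1)}-(n-1).
\eea
Since $\|{\ve c}\|_1=n-1$, dividing by $(n-1)\|{\ve a}\|_\infty^{1/(n-1)}$ shows that, provided $\|{\ve a}\|_\infty\ge C_n:=(2(n-1)/\rho_{n-1})^{n-1}$, the target ratio is at least $\rho_{n-1}/(2(n-1))$, a positive constant depending only on $n$.

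It then remains to observe that $S(H):=\{{\ve a}\in \NC^+(H):\|{\ve a}\|_\infty\ge C_n\}$ satisfies $|S(H)|\gg_n N(H)$ for $H$ large. This is a routine count: a standard M\"obius inversion gives $|\NC^+(H)|=H^n/\zeta(n)+O_n(H^{n-1})$ and $N(H)=2^n H^n/\zeta(n)+O_n(H^{n-1})$, while only $O_n(1)$ vectors have $\|{\ve a}\|_\infty<C_n$. Hence $|S(H)|/N(H)\to 2^{-n}$ as $H\to\infty$, and restricting the sum in the theorem to $S(H)$ (using $\max_{{\ve c}}\ge 0$ elsewhere, attained for instance at ${\ve c}={\ve a}$) completes the argument. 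The main content is carried by Theorem \ref{thm_optimal_bound}(i); what remains --- the genericity check and the asymptotic count --- is routine, and I do not anticipate any serious obstacle.
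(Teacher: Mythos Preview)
Your argument is correct and in fact cleaner than the paper's. Both proofs restrict to ${\ve a}\in\NC(H)$ with positive entries, choose a cost vector depending only on the index $\tau$ of the largest entry, verify genericity, and invoke Theorem~\ref{thm_optimal_bound}(i). The difference lies in the choice of ${\ve c}$. The paper takes ${\ve c}_{\ve a}=-{\ve e}_\tau$, which yields ${\ve l}=\pi_\tau({\ve a})/a_\tau$ and hence a lower bound of the form $\rho_{n-1}\bigl(\prod_i a_i/\|{\ve a}\|_\infty\bigr)^{1/(n-1)}-(n-1)/\|{\ve a}\|_\infty^{1/(n-1)}$ for the target ratio; this forces a further restriction to vectors with all $a_i\ge H/2$ and a somewhat delicate count of primitive points in that sub-box. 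Your choice $c_\tau=0$, $c_i=1$ for $i\ne\tau$ gives ${\ve l}=(1,\dots,1)$, so the product $a_\tau l_1\cdots l_{n-1}$ collapses to $\|{\ve a}\|_\infty$ and the lower bound becomes $\rho_{n-1}/(n-1)-\|{\ve a}\|_\infty^{-1/(n-1)}$, uniform over all of $\NC^+(H)$ once $\|{\ve a}\|_\infty$ exceeds a dimension-dependent threshold. This sidesteps the balanced-entry restriction entirely, and the remaining count $|\NC^+(H)|\asymp_n N(H)$ is immediate. Your route is shorter; the paper's choice, on the other hand, ties the bound more directly to the Frobenius-type quantity $(\prod_i a_i)^{1/(n-1)}$, which may be of independent interest.
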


\section{Discrete coverings and Frobenius numbers}\label{geometry}

For linearly independent ${\ve b}_1, \ldots, {\ve b}_k$ in $\R^d$, the set $\Lambda=\{\sum_{i=1}^{k} x_i {\ve b}_i,\, x_i\in \Z\}$ is a $k$-dimensional {\em lattice} with {\em basis} ${\ve b}_1, \ldots, {\ve b}_k$ and {\em determinant}
$\det(\Lambda)=(\det[{\ve b}_i\cdot {\ve b}_j]_{1\le i,j\le k})^{1/2}$, where ${\ve b}_i\cdot {\ve b}_j$ is the standard inner product of the basis vectors ${\ve b}_i$ and  ${\ve b}_j$. Recall that the {\em Minkowski sum} $X+Y$ of the sets $X, Y\subset \R^d$ consists of all points ${\ve x}+{\ve y}$ with ${\ve x}\in X$ and ${\ve y}\in Y$.
For a lattice $\Lambda\subset\R^d$ and ${\ve y}\in \R^d$, the set ${\ve y}+\Lambda$ is an {\em affine lattice} with determinant $\det(\Lambda)$.
For sets $K, S \subset \R^d$ and a lattice $\Lambda\subset \R^d$, the set $K + \Lambda$ is a  {\em covering}  of $S$ if $S \subset K + \Lambda$.

In what follows, $\mathcal{L}^d$ will denote the set of all $d$-dimensional lattices in $\R^d$. By $\mathcal{K}^d$ we will denote the set of all $d$-dimensional {\em convex bodies}, i.e., closed bounded convex sets with non-empty interior in $\R^d$.

\begin{lemma}\label{property_of_classical_coverings} Let $K\in\mathcal{K}^d$, $\Lambda\in\mathcal{L}^d$ and let
 $K + \Lambda$ be a covering of $\R^d$. Then for any vectors ${\ve x}, {\ve y}\in \R^d$,
we have $({\ve x}+K)\cap({\ve y}+\Lambda)\neq \emptyset$.
\end{lemma}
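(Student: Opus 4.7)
The plan is to reduce the claim to a single application of the covering hypothesis to a cleverly chosen point. The statement we want is purely translation-theoretic: $(\ve{x}+K)\cap(\ve{y}+\Lambda)\neq\emptyset$ amounts to finding $\ve{k}\in K$ and $\ve{v}\in\Lambda$ with $\ve{x}+\ve{k}=\ve{y}+\ve{v}$, i.e.\ $\ve{y}-\ve{x}=\ve{k}-\ve{v}$. Since $\Lambda$ is a group, the condition $\ve{k}-\ve{v}\in\{\ve{y}-\ve{x}\}$ is equivalent (after a sign flip on $\ve{v}$) to $\ve{y}-\ve{x}\in K+\Lambda$.

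Concretely, I would argue as follows. By the covering hypothesis, $K+\Lambda=\R^d$, so in particular the point $\ve{y}-\ve{x}\in\R^d$ admits a representation $\ve{y}-\ve{x}=\ve{k}+\ve{w}$ with $\ve{k}\in K$ and $\ve{w}\in\Lambda$. Setting $\ve{v}=-\ve{w}\in\Lambda$ (using that $\Lambda$ is closed under negation), the point $\ve{p}:=\ve{x}+\ve{k}$ satisfies $\ve{p}=\ve{x}+(\ve{y}-\ve{x}-\ve{w})=\ve{y}-\ve{w}=\ve{y}+\ve{v}$. Hence $\ve{p}\in(\ve{x}+K)\cap(\ve{y}+\Lambda)$, which is the desired nonempty intersection.

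There is really no obstacle here: the lemma is essentially a bookkeeping consequence of the fact that a covering translate by $\Lambda$ of any convex body is invariant under arbitrary translations when measured modulo $\Lambda$. The only minor subtlety worth flagging explicitly in the write-up is that the argument uses the group property of $\Lambda$ (namely $-\Lambda=\Lambda$), but not the convexity or boundedness of $K$; the hypothesis $K\in\mathcal{K}^d$ is only needed so that the covering statement $K+\Lambda=\R^d$ makes sense and matches the way it is used elsewhere in the paper.
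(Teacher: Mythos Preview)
Your proof is correct and is essentially the same argument as the paper's: both apply the covering hypothesis to a single point (the paper reduces first to $\ve y=\ve 0$ and applies it to $-\ve x$, you apply it directly to $\ve y-\ve x$) and then use $-\Lambda=\Lambda$ to land in the desired translate. Your observation that only the group property of $\Lambda$ is used, not the convexity of $K$, is accurate.
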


\begin{proof} It is sufficient to show that for any vector ${\ve x}\in \R^d$,
we have $({\ve x}+K)\cap \Lambda\neq \emptyset$.
Let ${\ve \lambda}$ be any point of $\Lambda$. Then ${\ve x}\in K+{\ve \lambda}$ if and only if $-{\ve \lambda}\in K+(-{\ve x})$.
Hence $\R^d$ is covered by the set $K + \Lambda$ if and only if for each vector ${\ve x}\in \R^d$,
the set ${\ve x}+K$ contains a point of $\Lambda$.
\qed
\end{proof}

For $K\in\mathcal{K}^d$ and $\Lambda\in\mathcal{L}^d$, we define  the {\em covering radius}  $\mu(K,\Lambda)$ as
\begin{equation*}
 \mu(K,\Lambda)  =\min\{\mu > 0 :  \mu K+ \Lambda \text{ is a covering of }\R^d\}\,.
\end{equation*}
For further results on covering radii in the context of the geometry of numbers see e.g. Gruber \cite{peterbible} and Gruber and Lekkerkerker \cite{GrLek}.

Let $S^d_{\ve 1}=\{{\ve x}\in \R^d_{\ge 0}: x_1+\cdots+x_d\le 1\}$ be the standard $d$-dimensional simplex.
The optimal lower bound in Theorem \ref{thm_optimal_bound} is expressed using the covering constant
$\rho_d=\rho_d(S^d_{\ve 1})$ defined as
\bea
\rho_d=\inf\{\mu(S^d_{\ve 1}, \Lambda): \det(\Lambda)=1 \}\,.
\eea

%
%

Let $\Lambda$ be a lattice in $\R^d$ with basis ${\ve b}_1, \ldots, {\ve b}_d$ and
let $\hat{\ve b}_i$ be the vectors obtained using the Gram-Schmidt orthogonalisation of ${\ve b}_1, \ldots, {\ve b}_d$:
\bea\begin{array}{l}
\hat{\ve b}_1= {\ve b}_1\,, \\
\hat{\ve b}_{i}= {\ve b}_i-\sum_{j=1}^{i-1}\mu_{i,j}\hat{\ve b}_j\,, \;\;j=2,\ldots,d\,,
\end{array}
\eea
where $\mu_{i,j}=({\ve b}_i\cdot\hat{\ve b}_j)/|\hat{\ve b}_j|^2$.

Define the 
box ${\hat B}={\hat B}({\ve b}_1, \ldots, {\ve b}_d)$ as
\bea
{\hat B}=[0,\hat{\ve b}_1)\times \cdots \times [0, \hat{\ve b}_d)\,.
\eea
We will need the following useful observation.
\begin{lemma}\label{coveringbox}
${\hat B}+\Lambda$ is a covering of $\R^d$.
\end{lemma}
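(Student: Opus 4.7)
The plan is to prove the covering by back-substitution in the Gram--Schmidt basis. Since $\hat{\ve b}_1,\ldots,\hat{\ve b}_d$ are pairwise orthogonal and nonzero they form a basis of $\R^d$, and interpreting $\hat B$ as $\{\sum_{i=1}^{d}t_i\hat{\ve b}_i:0\le t_i<1\}$, the task reduces to showing that for every ${\ve x}\in\R^d$ one can find integers $n_1,\ldots,n_d$ with ${\ve x}-\sum_{i=1}^{d}n_i{\ve b}_i\in\hat B$.

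First I would expand ${\ve x}=\sum_{j=1}^{d}\alpha_j\hat{\ve b}_j$ in the orthogonal Gram--Schmidt basis and substitute the relations ${\ve b}_i=\hat{\ve b}_i+\sum_{j<i}\mu_{i,j}\hat{\ve b}_j$ into the putative lattice vector ${\ve \lambda}=\sum_i n_i{\ve b}_i$, which yields
\[
{\ve \lambda}=\sum_{j=1}^{d}\Bigl(n_j+\sum_{i>j}n_i\mu_{i,j}\Bigr)\hat{\ve b}_j.
\]
The membership ${\ve x}-{\ve \lambda}\in\hat B$ is therefore equivalent to the $d$ scalar conditions
\[
\alpha_j-n_j-\sum_{i>j}n_i\mu_{i,j}\in[0,1),\qquad j=1,\ldots,d.
\]

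The key observation is that the $j$-th condition involves only $n_j,n_{j+1},\ldots,n_d$, so the system is triangular. This lets me fix the integers in the order $j=d,d-1,\ldots,1$: start with $n_d=\lfloor\alpha_d\rfloor$, and having fixed $n_{j+1},\ldots,n_d$ take
\[
n_j=\Bigl\lfloor\alpha_j-\sum_{i>j}n_i\mu_{i,j}\Bigr\rfloor.
\]
Each choice places the corresponding coordinate of ${\ve x}-{\ve \lambda}$ into $[0,1)$ and does not disturb the coordinates already processed. There is no real obstacle once the coefficient identity above has been written down; back-substitution immediately produces a lattice vector ${\ve \lambda}\in\Lambda$ with ${\ve x}-{\ve \lambda}\in\hat B$, which is exactly the required covering statement.
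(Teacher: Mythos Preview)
Your proof is correct and is essentially the same argument as the paper's: both exploit the triangular relation ${\ve b}_i=\hat{\ve b}_i+\sum_{j<i}\mu_{i,j}\hat{\ve b}_j$ to reduce the $\hat{\ve b}_d$-coefficient first and then work downward, with the paper phrasing this as an iterative subtraction procedure and you phrasing it as back-substitution in an explicit triangular system. The two presentations compute exactly the same integers $n_j=\lfloor\alpha_j-\sum_{i>j}n_i\mu_{i,j}\rfloor$ in the same order.
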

A proof of Lemma \ref{coveringbox} is implicitly contained, for instance, in the proof of the classical result of Babai \cite{Babai} on the nearest lattice point problem (see Theorem 5.3.26 in \cite{GLS}). For completeness, we include a proof that follows along an argument of the proof of Theorem 5.3.26 in \cite{GLS}.
\begin{proof}
Let ${\ve x}$ be any point of $\R^d$. It is sufficient to find a point ${\ve y}\in \Lambda$ such that
\be\label{GS-box-representation}
{\ve x}-{\ve y}=\sum_{i=1}^d \lambda_i \hat{\ve b}_i\,,\;\; 0\le \lambda_i<1\,, \;\;1\le i \le d\,.
\ee
This can be achieved using the following procedure. First we write
\bea
{\ve x}=\sum_{i=1}^d \lambda_i^0 \hat{\ve b}_i\,.
\eea
Then we subtract $ \lfloor\lambda_d^0 \rfloor {\ve b}_d$ to get a representation
\bea
{\ve x}-\lfloor\lambda_d^0 \rfloor {\ve b}_d=\sum_{i=1}^d \lambda_i^1 \hat{\ve b}_i\,,
\eea
where $0\le \lambda_d^1<1$. Next subtract $ \lfloor\lambda_{d-1}^1 \rfloor {\ve b}_{d-1}$ and so on until we obtain the representation
(\ref{GS-box-representation}). The lemma is proved.
\qed
\end{proof}

Let now $\Lambda$ be a sublattice of $\Z^{d}$ of full rank and let $K\in\mathcal{K}^d$.
In the course of the proof of part (i) of Theorem \ref{thm_bounds_for_distance} we will need to work with coverings $K+\Lambda$ of $\Z^d$, that we refer to as {\em discrete coverings}. For this purpose, we will need the following auxiliary results.

By Theorem I (A) and Corollary 1 in Chapter I of Cassels \cite{Cassels}, there exists a unique basis ${\ve b}_1, \ldots, {\ve b}_d$ of the sublattice $\Lambda$
of the form
\be\label{special_basis}
\begin{array}{l}
{\ve b}_1= v_{11}{\ve e}_1\,,\\
{\ve b}_2= v_{21}{\ve e}_1+v_{22}{\ve e}_2\,,\\
\vdots\\
{\ve b}_d= v_{d1}{\ve e}_1+\cdots+v_{dd}{\ve e}_d\,,
\end{array}
\ee
where ${\ve e}_i$ are the standard basis vectors of $\Z^d$, the coefficients $v_{ij}$ are integers, $v_{ii}>0$ and $0\le v_{ij}<v_{jj}$.
Alternatively, the basis ${\ve b}_1, \ldots, {\ve b}_d$ can be obtained by taking the Hermite Normal Form of a basis matrix for $\Lambda$.

Define the box $B=B({\ve b}_1, \ldots, {\ve b}_d)$ as
\bea
B=[0,v_{11}-1]\times \cdots \times [0, v_{dd}-1]\,.
\eea

%
%
\begin{lemma}\label{discrete_covering_box}
$B+\Lambda$ is a covering of $\Z^d$.
\end{lemma}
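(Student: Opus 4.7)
The plan is to imitate the argument behind Lemma \ref{coveringbox} but carry it out entirely over the integers, exploiting the upper-triangular Hermite form (\ref{special_basis}). Given an arbitrary ${\ve x}=(x_1,\dots,x_d)^T\in\Z^d$, I intend to construct ${\ve y}\in\Lambda$ such that ${\ve x}-{\ve y}\in B$ via a finite greedy reduction that processes the coordinates from $d$ down to $1$.

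Concretely, I would set ${\ve x}^{(d)}={\ve x}$ and, for $i=d,d-1,\dots,1$, define
\[
q_i=\left\lfloor \frac{x_i^{(i)}}{v_{ii}}\right\rfloor\in\Z, \qquad {\ve x}^{(i-1)}={\ve x}^{(i)}-q_i{\ve b}_i,
\]
where $x_i^{(i)}$ denotes the $i$-th coordinate of ${\ve x}^{(i)}$. Since every ${\ve b}_i$ has integer entries and each $q_i$ is an integer, every ${\ve x}^{(i)}$ remains in $\Z^d$; by the division algorithm the $i$-th coordinate of ${\ve x}^{(i-1)}$ lies in $\{0,1,\dots,v_{ii}-1\}$.

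The crucial structural observation, and the reason the induction must proceed from top to bottom, is that (\ref{special_basis}) forces ${\ve b}_1,\dots,{\ve b}_{i-1}$ to have zero entries in the positions $i,i+1,\dots,d$. Consequently the subsequent subtractions do not disturb the coordinates that have already been reduced, so for each $i$ the $i$-th coordinate of ${\ve x}^{(0)}$ lies in $\{0,1,\dots,v_{ii}-1\}\subset[0,v_{ii}-1]$, i.e., ${\ve x}^{(0)}\in B$. Setting ${\ve y}=\sum_{i=1}^d q_i{\ve b}_i\in\Lambda$ then yields ${\ve x}={\ve y}+{\ve x}^{(0)}\in\Lambda+B$, proving $\Z^d\subset B+\Lambda$.

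I do not anticipate any real obstacle: the argument is a direct integral counterpart of the standard Gram--Schmidt reduction used in Lemma \ref{coveringbox}, and the only point requiring care is the top-down reduction order, which is dictated precisely by the triangular shape of the Hermite basis in (\ref{special_basis}). The auxiliary inequalities $0\le v_{ij}<v_{jj}$ play no role in this covering claim; only the positivity of the diagonal entries $v_{ii}$ and the upper-triangular support pattern are used.
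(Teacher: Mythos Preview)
Your proof is correct and is essentially the same top-down reduction that underlies the paper's argument. The paper's version is just packaged more tersely: it observes that for the Hermite basis (\ref{special_basis}) the Gram--Schmidt vectors are $\hat{\ve b}_i=v_{ii}{\ve e}_i$, so $\hat B=[0,v_{11})\times\cdots\times[0,v_{dd})$, and then invokes Lemma~\ref{coveringbox} (whose proof is exactly your greedy reduction) together with the fact that $\Lambda\subset\Z^d$ forces the residue in $\hat B$ to be integral and hence to lie in $B$.
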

\begin{proof}
Observe that for the basis (\ref{special_basis}) the box ${\hat B}={\hat B}({\ve b}_1, \ldots, {\ve b}_d)$ can be written as
\bea
{\hat B}=[0,v_{11})\times \cdots \times [0, v_{dd})\,.
\eea
The result now follows by Lemma \ref{coveringbox}.
\qed
\end{proof}


\begin{lemma}\label{integer_covering_simplex}
$(\det(\Lambda)-1)S^d_{\ve 1} + \Lambda$ covers $\Z^{d}$.
\end{lemma}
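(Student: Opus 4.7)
The plan is to reduce the claim to the box-covering Lemma \ref{discrete_covering_box} and then verify that the box $B$ sits inside the scaled simplex $(\det(\Lambda)-1)S^d_{\ve 1}$.

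First I would take an arbitrary ${\ve z}\in\Z^d$. Lemma \ref{discrete_covering_box} furnishes ${\ve b}\in B$ and ${\ve y}\in\Lambda$ with ${\ve z}={\ve b}+{\ve y}$; since $\Lambda\subset\Z^d$ and ${\ve z}\in\Z^d$, the vector ${\ve b}$ is automatically integer, so ${\ve b}\in B\cap\Z^d$, i.e.\ its coordinates satisfy $0\le b_i\le v_{ii}-1$ where $v_{11},\dots,v_{dd}$ come from the Hermite-type basis (\ref{special_basis}) of $\Lambda$. In particular ${\ve b}\in\R^d_{\ge 0}$ and
\[
\|{\ve b}\|_1=\sum_{i=1}^d b_i\le \sum_{i=1}^d (v_{ii}-1)=\Bigl(\sum_{i=1}^d v_{ii}\Bigr)-d.
\]

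The remaining step is the elementary inequality
\[
\sum_{i=1}^d v_i\ \le\ \prod_{i=1}^d v_i+d-1
\]
for positive integers $v_1,\dots,v_d$. I would prove this by induction on $d$: for $d=1$ it is an equality, and for the inductive step one notes that $(v_1-1)(v_2-1)\ge 0$ gives $v_1+v_2\le v_1v_2+1$, so one can replace the pair $(v_1,v_2)$ by the single entry $v_1v_2$ and apply the hypothesis with $d-1$ entries. Applied to $v_i=v_{ii}$, and using $\det(\Lambda)=v_{11}\cdots v_{dd}$, this yields
\[
\|{\ve b}\|_1\le \det(\Lambda)+d-1-d=\det(\Lambda)-1,
\]
so ${\ve b}\in(\det(\Lambda)-1)S^d_{\ve 1}$ and consequently ${\ve z}\in(\det(\Lambda)-1)S^d_{\ve 1}+\Lambda$, as required.

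There is no real obstacle here: the previous lemma already does the geometric work, and the only new input is the product-vs-sum inequality above, which is an immediate two-line induction. The only thing to be careful about is ensuring the box $B$ lies in the nonnegative orthant (which it does by construction of the Hermite normal form basis), so that the $\ell_1$-bound on ${\ve b}$ really certifies membership in the simplex.
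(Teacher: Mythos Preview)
Your argument is correct and follows essentially the same route as the paper: both reduce the claim via Lemma \ref{discrete_covering_box} to the inclusion $B\subset(\det(\Lambda)-1)S^d_{\ve 1}$, which amounts to the inequality $\sum_{i=1}^d(v_{ii}-1)\le v_{11}\cdots v_{dd}-1$, proved by induction on $d$. Your write-up is a bit more explicit about the induction step, but the strategy is identical.
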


\begin{proof}
By Lemma \ref{discrete_covering_box}, it is sufficient to show that $B\subset (\det(\Lambda)-1)S^d_{\ve 1}$ or, equivalently,
\be\label{discrete_corner_in_simplex}
\sum_{i=1}^d (v_{ii}-1)\le \det(\Lambda)-1\,.
\ee
Noticing that $\det(\Lambda)=v_{11}\cdots v_{dd}$, the inequality (\ref{discrete_corner_in_simplex}) easily follows by induction for $d$.
%
%
%
%
%
%
%
%
\qed
\end{proof}

\begin{lemma}\label{property_of_coverings}
Suppose that $K + \Lambda$ is a covering of $\Z^d$. Then for any vectors ${\ve x}, {\ve y}\in \Z^d$,
we have $({\ve x}+K)\cap({\ve y}+\Lambda)\neq \emptyset$.
\end{lemma}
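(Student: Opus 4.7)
The plan is to adapt the argument from Lemma \ref{property_of_classical_coverings} to the discrete setting. The structural facts we get for free are that $\Lambda$ is a sublattice of $\Z^d$, so it is symmetric about the origin, and that translation by a point of $\Z^d$ preserves the ambient set $\Z^d$, which is what permits the covering hypothesis to be invoked inside the proof.

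First I would reduce to showing that for every ${\ve z}\in \Z^d$ the set $({\ve z}+K)\cap \Lambda$ is nonempty. Given ${\ve x},{\ve y}\in \Z^d$, set ${\ve z}:={\ve x}-{\ve y}\in \Z^d$ and observe that translating by $-{\ve y}$ gives
\[
({\ve x}+K)\cap({\ve y}+\Lambda) \;=\; {\ve y}\,+\,\bigl(({\ve z}+K)\cap \Lambda\bigr),
\]
so the nonemptiness of the two sides is equivalent. Next I would apply the covering hypothesis at the point $-{\ve z}\in \Z^d$: there exist ${\ve k}\in K$ and ${\ve \mu}\in \Lambda$ with $-{\ve z}={\ve k}+{\ve \mu}$. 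The vector $-{\ve \mu}={\ve z}+{\ve k}$ then lies in ${\ve z}+K$ and, by symmetry of the lattice, also in $\Lambda$, giving the desired point in the intersection.

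There is no serious obstacle: the argument essentially transcribes the continuous one of Lemma \ref{property_of_classical_coverings}. The only points to handle with a bit of care are keeping the reduction inside $\Z^d$, which is automatic from ${\ve x},{\ve y}\in \Z^d$ together with $\Lambda\subset \Z^d$, and invoking the covering at $-{\ve z}$ rather than ${\ve z}$ so that the sign flip on ${\ve \mu}$ lands the lattice point inside ${\ve z}+K$.
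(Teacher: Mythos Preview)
Your proposal is correct and is exactly the adaptation of the proof of Lemma~\ref{property_of_classical_coverings} that the paper indicates; the only extra care you need, and which you handle, is that ${\ve z}={\ve x}-{\ve y}$ and $-{\ve z}$ stay in $\Z^d$ so that the discrete covering hypothesis applies.
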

A proof of Lemma \ref{property_of_coverings} can be easily obtained from of the proof of Lemma \ref{property_of_classical_coverings}.



Given  $K\in\mathcal{K}^d$ and $\Lambda\in\mathcal{L}^d$, we define
the {\em discrete covering radius} $\mu(K,\Lambda; \Z^d)$ as
\begin{equation*}
\begin{split}
 \mu(K,\Lambda; \Z^d)  =\min\{\mu > 0 : \mu K+ \Lambda  \text{ is a covering of }\Z^d \}\,.
\end{split}
\end{equation*}

For  ${\ve y}=(y_1, \ldots, y_d)^T$ with nonzero entries we will denote $\sign(y_i)=y_i/|y_i|$. Let $\mathcal{O}_{\ve y}=\{\ve x\in\R^d\;|\;\sign(y_i)  x_i\ge 0,\;1\le i \le d\}$ be the orthant that contains the vector ${\ve y}$.    Next, for ${\ve a}\in \Z^n$ satisfying (\ref{nonzeroA}) we define the $(n-1)$-dimensional simplex
\begin{equation*}
S_{\ve a}= \left\{ {\ve x} \in \mathcal{O}_{\pi_n({\ve a})}: a_1\,x_1+\cdots +a_{n-1}\,x_{n-1}\leq 1 \right\}
\end{equation*}
and the $(n-1)$-dimensional lattice
\begin{equation*}
\Lambda_{\ve a} = \left\{ {\ve x}\in\Z^{n-1} : a_1\,x_1+\cdots + a_{n-1}\,x_{n-1}\equiv 0 \bmod |a_n| \right\}.
\end{equation*}

Given ${\ve a}=(a_1, \ldots, a_n)^T\in \Z_{>0}^n$ with $\gcd({\ve a})=1$,
the {\em Frobenius number} $\frob({\ve a})$ is least so that every integer $b>\frob({\ve a})$ can be represented as
$
b= a_1 x_1 +\cdots+ a_n x_n
$
with nonnegative integers $x_1,\ldots, x_n$.

Kannan \cite{Kannan} found the following very useful identities:
\begin{equation}\label{spaceKannan}
      \mu(S_{\ve a},\Lambda_{\ve a})= \frob({\ve a})+a_1+\cdots +a_n
\end{equation}
and
\begin{equation}\label{integralKannan}
       \mu(S_{\ve a},\Lambda_{\ve a}; \Z^{n-1})= \frob({\ve a})+a_n.
\end{equation}

\section{Proof of Theorem \ref{thm_bounds_for_distance}}

We will use the following notation. $\Lambda({\ve a},b)$ will denote the affine lattice formed by integer points in the affine hyperplane ${\ve a}^T{\ve x}=b$, that is
$\Lambda({\ve a},b)=\{{\ve x}\in \Z^n: {\ve a}^T{\ve x}=b\}$. 
We also set 
$Q({\ve a}, b)=\pi_n(P({\ve a}, b))$ and $L({\ve a},b)=\pi_n(\Lambda({\ve a},b))$.
Notice that the affine lattice $L(A,b)$ can be written in the form
\begin{equation}\label{lattice_as_congruence}
L({\ve a},b) = \left\{ {\ve x}\in\Z^{n-1} : a_1\,x_1+\cdots + a_{n-1}\,x_{n-1}\equiv b \bmod |a_n| \right\}\,.
\end{equation}
Furthermore,
 $L({\ve a},0)=\Lambda_{\ve a}$ is a lattice of determinant $\det(\Lambda_{\ve a})=a_n$ and
 $L({\ve a},b)=\Lambda_{\ve a}+{\ve y}$ for some ${\ve y}\in \Z^{n-1}$.

To prove part (i) we will start with two special cases. First we suppose that all entries of ${\ve a}$ are positive. In this setting, we obtain an upper bound for  $\idist({\ve a},b)$ in terms of the Frobenius number $\frob({\ve a})$. This bound will be also used in the proof of Theorem \ref{Ratio}.

\begin{lemma}\label{lemma_bounds_for_distance_via_Frobi}
Let ${\ve a}\in \Z_{>0}^{ n}$ satisfy (\ref{nonzeroA}) and  $b\in \Z$. Then 
\be\label{bound_via_Frobi}
\idist({\ve a},b)\le\frac{\frob({\ve a})+\|{\ve a}\|_{\infty}}{\min_i a_i}\,.
\ee

\end{lemma}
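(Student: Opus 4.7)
My plan is to fix an arbitrary vertex ${\ve v}$ of $P({\ve a},b)$ (if $P({\ve a},b)\cap\Z^n=\emptyset$ then $\idist({\ve a},b)=-\infty$ and the bound is trivial, so I assume otherwise) and, after a permutation of coordinates (which leaves both sides of (\ref{bound_via_Frobi}) invariant), take ${\ve v}=(b/a_n){\ve e}_n$. Then $\pi_n({\ve v})={\ve 0}$, the affine lattice $L({\ve a},b)\subset\Z^{n-1}$ is a nonempty coset of $\Lambda_{\ve a}$, and I can transfer the problem to the $(n-1)$-dimensional projected setting in which Kannan's identity (\ref{integralKannan}) and the discrete covering machinery of Section~\ref{geometry} are available.

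The argument then splits according to the size of $b$. If $b\ge\frob({\ve a})+a_n$, I would apply (\ref{integralKannan}) to conclude that $(\frob({\ve a})+a_n)S_{\ve a}+\Lambda_{\ve a}$ is a covering of $\Z^{n-1}$, and invoke Lemma~\ref{property_of_coverings} to produce a point ${\ve z}'\in L({\ve a},b)\cap(\frob({\ve a})+a_n)S_{\ve a}$. By the very definition of $S_{\ve a}$, this forces ${\ve z}'\ge{\ve 0}$ together with $\sum_{i<n}a_iz_i'\le\frob({\ve a})+a_n\le b$, so that $z_n:=(b-\sum_{i<n}a_iz_i')/a_n$ is a nonnegative integer (integrality being built into $L({\ve a},b)$), and ${\ve z}:=({\ve z}',z_n)\in P({\ve a},b)\cap\Z^n$. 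A coordinatewise estimate, $|v_i-z_i|=z_i'\le(\frob({\ve a})+a_n)/a_i$ for $i<n$ and $|v_n-z_n|=\sum_{i<n}a_iz_i'/a_n\le(\frob({\ve a})+a_n)/a_n$, then yields $\|{\ve v}-{\ve z}\|_\infty\le(\frob({\ve a})+a_n)/\min_i a_i$.

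In the remaining case $b<\frob({\ve a})+a_n$ no covering argument is needed: any integer point ${\ve z}\in P({\ve a},b)$ satisfies $z_i\le b/a_i$ for every $i$, while $v_n=b/a_n$, so coordinatewise $\|{\ve v}-{\ve z}\|_\infty\le b/\min_i a_i<(\frob({\ve a})+a_n)/\min_i a_i$. Since $a_n\le\|{\ve a}\|_\infty$, the two cases combine to give (\ref{bound_via_Frobi}).

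The main obstacle I expect is exactly what forces the split above. Kannan's identity (\ref{integralKannan}) places a lattice point of $L({\ve a},b)$ inside a scaled copy of $S_{\ve a}$, but to recover a feasible ${\ve z}\in P({\ve a},b)$ the scaled simplex must fit inside $bS_{\ve a}=Q({\ve a},b)$. The inequality $b\ge\frob({\ve a})+a_n$ guarantees precisely this, and when it fails I have to fall back on the crude bound $b/\min_i a_i$ on all coordinate differences; some care is also needed to ensure the coordinate permutation used to place ${\ve v}$ at the $n$-th axis does not change the quantities appearing in (\ref{bound_via_Frobi}).
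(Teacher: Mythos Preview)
Your proposal is correct and follows essentially the same argument as the paper's proof: the paper likewise permutes so that ${\ve v}=(0,\ldots,0,b/a_n)^T$, splits on whether $b$ exceeds $\mu(S_{\ve a},\Lambda_{\ve a};\Z^{n-1})=\frob({\ve a})+a_n$, uses the discrete covering (Lemma~\ref{property_of_coverings} and (\ref{integralKannan})) to locate ${\ve z}'\in L({\ve a},b)\cap(\frob({\ve a})+a_n)S_{\ve a}$ in the large-$b$ case, and in the small-$b$ case invokes the containment $P({\ve a},b)\subset[0,b/\min_i a_i]^n$ exactly as you do. The only cosmetic difference is that the paper places the boundary value $b=\frob({\ve a})+a_n$ in the ``small'' case rather than the ``large'' one.
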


\begin{proof}

Then, if $b<0$ the polyhedron $P({\ve a},b)$ is empty and
in the case $b=0$ we have $P({\ve a},b)= {\ve 0}$.
Assume now that $b$ is a positive integer. Clearly, $P({\ve a}, b)$ is a simplex with vertices
\bea
\left(\frac{b}{a_1},0, \ldots, 0\right)^T, \left(0, \frac{b}{a_2}, \ldots, 0\right)^T, \ldots, \left(0, \ldots, 0, \frac{b}{a_n}\right)^T
\eea
and, consequently,
\be\label{cube}
P({\ve a}, b)\subset \left[0, \frac{b}{\min_i a_i}\right]^n\,.
\ee

Let ${\ve v}$ be any vertex of $P({\ve a},b)$. Rearranging the entries of ${\ve a}$, we may assume that
${\ve v}=(0,\ldots, 0, b/a_n)^T$.
If $b \le \mu(S_{\ve a},\Lambda_{\ve a}; \Z^{n-1})$ then (\ref{integralKannan}) combined with (\ref{cube}) implies (\ref{bound_via_Frobi}).
Suppose now that $b > \mu(S_{\ve a},\Lambda_{\ve a}; \Z^{n-1})$.
Then
\be\label{dilated_simplex_in_projection}
\mu(S_{\ve a},\Lambda_{\ve a}; \Z^{n-1}) S_{\ve a}\subset bS_{\ve a}= Q({\ve a}, b)\,.
\ee
%
By Lemma \ref{property_of_coverings}, applied to the covering $\mu(S_{\ve a},\Lambda_{\ve a}; \Z^{n-1}) S_{\ve a}+\Lambda_{\ve a}$ of $\Z^{n-1}$,  there is a point $(z_1, \ldots, z_{n-1})^T\in L({\ve a},b)\cap \mu(S_{\ve a},\Lambda_{\ve a}; \Z^{n-1}) S_{\ve a}$.
Hence, using (\ref{dilated_simplex_in_projection}) and the definition of the lattice $L({\ve a},b)$,
\bea
{\ve z}=\left(z_1, \ldots, z_{n-1}, \frac{b}{a_n}-\frac{a_1z_1+\cdots+a_{n-1}z_{n-1}}{a_n}\right)^T
\eea
is an integer point in the knapsack polyhedron $P({\ve a}, b)$.

Since $(z_1, \ldots, z_{n-1})^T\in \mu(S_{\ve a},\Lambda_{\ve a}; \Z^{n-1}) S_\A$, we have
\bea
||{\ve v}-{\ve z}||_\infty \le \frac{\mu(S_{\ve a},\Lambda_{\ve a}; \Z^{n-1})}{\min_i a_i}\le \frac{\frob({\ve a})+a_n}{\min_i a_i}\,,
\eea
where the last inequality follows from (\ref{integralKannan}). The lemma is proved.
\qed
\end{proof}

The next corollary will complete the proof of part (i) for vectors ${\ve a}$ with positive entries,

\begin{corollary}\label{coro_positive_entries}
Let ${\ve a}\in \Z^{n}_{>0}$ satisfy (\ref{nonzeroA}) and   $b\in \Z$. Then
\be\label{bound_obtained_via_Frobi}
\idist({\ve a},b)\le\|{\ve a}\|_{\infty}-1\,.
\ee
\end{corollary}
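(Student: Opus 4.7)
The plan is to combine Lemma \ref{lemma_bounds_for_distance_via_Frobi} with an upper bound on the Frobenius number. Write $m = \min_i a_i$ and $M = \|{\ve a}\|_\infty$. Lemma \ref{lemma_bounds_for_distance_via_Frobi} gives
\[
\idist({\ve a},b) \;\le\; \frac{\frob({\ve a}) + M}{m},
\]
and since $(mM - m)/m = M - 1$, the target inequality \eqref{bound_obtained_via_Frobi} reduces to proving the Frobenius-number bound
\[
\frob({\ve a}) \;\le\; mM - m - M. \qquad (\ast)
\]

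For $(\ast)$ I would distinguish two cases on $\gcd(m, M)$. If $\gcd(m, M) = 1$, then $(\ast)$ follows at once from monotonicity of the Frobenius number under adjoining generators, combined with the classical Sylvester identity $\frob(m, M) = mM - m - M$: any nonnegative representation using $\{m, M\}$ is also a nonnegative representation using ${\ve a}$, hence $\frob({\ve a}) \le \frob(m, M)$. If $\gcd(m, M) > 1$, the hypothesis $\gcd({\ve a}) = 1$ guarantees further entries of ${\ve a}$ that refine the partial gcd of $\{m, M\}$ down to $1$. In this setting it is convenient to reformulate $(\ast)$ using Kannan's identity \eqref{integralKannan}: after reordering so that $a_n = M$, $(\ast)$ becomes the covering statement $m(M-1)\,S_{\ve a} + \Lambda_{\ve a} \supseteq \Z^{n-1}$, which can be verified directly by analyzing the Ap\'ery set of ${\ve a}$ modulo $m$ along a chain of partial-gcd reductions, producing for each residue $r \in \{0,\dots,m-1\}$ a nonnegative integer combination of the remaining entries of ${\ve a}$ in residue $r$ of total size at most $M(m-1)$.

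The main technical obstacle is the case $\gcd(m, M) > 1$: Sylvester's two-generator bound no longer applies, and one has to produce residue witnesses of bounded size by invoking the full generating set, either through the Ap\'ery-set analysis sketched above or via an equivalent direct covering construction in the style of Lemma \ref{discrete_covering_box}.
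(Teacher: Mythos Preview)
Your overall approach matches the paper's exactly: combine Lemma~\ref{lemma_bounds_for_distance_via_Frobi} with the Frobenius-number bound $(\ast)$. What you have not recognised is that $(\ast)$ is precisely the classical Schur bound
\[
\frob({\ve a}) \;\le\; (\min_i a_i)\,\|{\ve a}\|_\infty - \min_i a_i - \|{\ve a}\|_\infty,
\]
which the paper simply cites from Brauer~\cite{Brauer} as \eqref{Schur}; no case distinction on $\gcd(m,M)$ is needed.

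Your Case~1 argument via Sylvester plus monotonicity is correct. Case~2 is the only genuine gap in the proposal: the detour through Kannan's identity and a ``chain of partial-gcd reductions'' is vague, and more elaborate than necessary. If you want a self-contained proof of $(\ast)$ that handles both cases at once, the standard one runs as follows. Reorder so that $a_1=m$, and for each residue $r\in\{0,\dots,m-1\}$ let $t_r$ be the least nonnegative representable integer congruent to $r$ modulo $m$, so that $\frob({\ve a})=\max_r t_r - m$. Since $\gcd({\ve a})=1$, the residues of $a_2,\dots,a_n$ generate $\Z/m\Z$, so for each $r$ there is a nonnegative combination $\sum_{i\ge 2} a_i x_i$ in that class. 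If $\sum_{i\ge 2} x_i\ge m$, then among the $1+\sum x_i$ partial sums of the corresponding multiset two coincide modulo $m$, and deleting the block between them gives a shorter combination in the same residue class; iterate until $\sum_{i\ge 2} x_i\le m-1$. Then $t_r\le \sum_{i\ge 2} a_i x_i \le M(m-1)$, hence $\frob({\ve a})\le M(m-1)-m = mM-m-M$, which is $(\ast)$.
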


\begin{proof}
We use a classical upper bound for the Frobenius number due to Schur (see Brauer \cite{Brauer}):
\be\label{Schur}
\frob({\ve a})\le (\min_i a_i)\|{\ve a}\|_{\infty}-(\min_i a_i)-\|{\ve a}\|_{\infty}\,.
\ee
The bound (\ref{bound_via_Frobi}) combined with (\ref{Schur}) immediately implies (\ref{bound_obtained_via_Frobi}).

\end{proof}

Next, we will consider the case when  at least one of the entries of ${\ve a}$ is negative, the entries of ${\ve a}$ satisfy the condition
\be\label{main_assumption}
a_n= \min_{i=1,\ldots, n} |a_i|<\|{\ve a}\|_{\infty}
\ee
and the polyhedron $P(\pi_n({\ve a}), b)=\{{\ve x}\in \R^{n-1}_{\ge 0}: \pi_n({\ve a})^T {\ve x}=b\}\,$
is bounded or empty.

\begin{lemma}\label{lemma_bounds_for_distance_discrete_coverings}
Let ${\ve a}\in \Z^{n}$ satisfy (\ref{nonzeroA}) and   $b\in \Z$. If 
${\ve a}$ has at least one negative entry, (\ref{main_assumption}) holds and $P(\pi_n({\ve a}),b)$
is bounded or empty, then
\bea
\idist({\ve a},b)\le\|{\ve a}\|_{\infty}-1\,.
\eea

\end{lemma}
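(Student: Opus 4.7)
The plan is to adapt the covering argument from Lemma~\ref{lemma_bounds_for_distance_via_Frobi}, but instead of the Kannan-optimal simplex $S_{\ve a}$ (whose covering radius involves the Frobenius number), I would use the much simpler covering of $\Z^{n-1}$ by translates of the standard simplex $(a_n-1)S^{n-1}_{\ve 1}$ supplied by Lemma~\ref{integer_covering_simplex}. The entire argument takes place after projecting via $\pi_n$ to $\R^{n-1}$.

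First I would normalise the sign pattern of ${\ve a}$. The hypothesis $a_n=\min_i|a_i|<\|{\ve a}\|_\infty$ forces $a_n>0$; combined with the boundedness-or-emptiness of $P(\pi_n({\ve a}),b)$ and the existence of a negative entry, this forces $a_1,\ldots,a_{n-1}<0$. A case analysis on $\sign(b)$ then shows that every vertex of $P({\ve a},b)$ lies on a single coordinate axis: $(b/a_n){\ve e}_n$ when $b\ge 0$, and $(b/a_j){\ve e}_j$ with $j<n$ when $b<0$. After relabelling within $\{1,\ldots,n-1\}$, the vertex ${\ve v}$ is either $(b/a_n){\ve e}_n$ or $(b/a_1){\ve e}_1$, and I set $\tilde{\ve v}:=\lceil\pi_n({\ve v})\rceil\in\Z^{n-1}$. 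Since $\det(\Lambda_{\ve a})=a_n$, Lemma~\ref{integer_covering_simplex} provides the covering $(a_n-1)S^{n-1}_{\ve 1}+\Lambda_{\ve a}$ of $\Z^{n-1}$, and Lemma~\ref{property_of_coverings} applied to $\tilde{\ve v}$ and the affine lattice $L({\ve a},b)$ (nonempty because $\gcd({\ve a})=1$) produces $\tilde{\ve z}\in L({\ve a},b)$ with ${\ve u}:=\tilde{\ve z}-\tilde{\ve v}\in\Z^{n-1}_{\ge 0}$ and $\sum_{i<n}u_i\le a_n-1$.

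I would then lift to ${\ve z}=(\tilde{\ve z},z_n)$ with $z_n:=(b-\sum_{i<n}a_i\tilde z_i)/a_n$; this is an integer by construction of $L({\ve a},b)$, and a direct check using $a_i<0$ for $i<n$ gives $z_n\ge 0$ in both sign cases for $b$, so ${\ve z}\in P({\ve a},b)\cap\Z^n$. For coordinates $i<n$, rounding contributes at most one, giving $|v_i-z_i|\le 1+u_i\le a_n\le\|{\ve a}\|_\infty-1$. The delicate estimate is for the $n$-th coordinate, where combining $|a_1\lceil b/a_1\rceil-b|\le|a_1|-1$ with $\sum_{i<n}|a_i|u_i\le\|{\ve a}\|_\infty(a_n-1)$ yields the real-valued bound $z_n\le\|{\ve a}\|_\infty-1/a_n$. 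This exceeds $\|{\ve a}\|_\infty-1$ whenever $a_n\ge 2$, so the main obstacle is to rescue the bound by the integrality of $z_n$: since $z_n\in\Z_{\ge 0}$ one has $z_n\le\lfloor\|{\ve a}\|_\infty-1/a_n\rfloor=\|{\ve a}\|_\infty-1$. This integer rounding is precisely why the discrete covering of Lemma~\ref{integer_covering_simplex}, rather than a real-valued one, is the appropriate tool here.
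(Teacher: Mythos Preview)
Your approach is correct and essentially identical to the paper's: both place a translate of $(a_n-1)S^{n-1}_{\ve 1}$ at the rounded-up projection $\lceil\pi_n({\ve v})\rceil$, invoke Lemmas~\ref{integer_covering_simplex} and~\ref{property_of_coverings} to locate a point of $L({\ve a},b)$ inside it, lift back to $P({\ve a},b)$, and close the last-coordinate estimate via the integrality of $z_n$. The only cosmetic difference is that the paper splits explicitly into $b>0$ (where $v_n=b/a_n$ and one must bound $|z_n-v_n|$ rather than $z_n$; there the estimate $\sum|a_i|u_i/a_n\le\|{\ve a}\|_\infty(a_n-1)/a_n<\|{\ve a}\|_\infty-1$ holds directly, no integrality needed) and $b\le 0$ (your ``delicate estimate'', where $v_n=0$ and integrality is used exactly as you describe).
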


\begin{proof}

The vector ${\ve a}$ has at least one positive and at least one negative entry and, consequently, the polyhedron $P({\ve a}, b)$ is unbounded.
%
%
Since we assumed $a_n>0$, $P(\pi_n({\ve a}),b)$ can be bounded or empty only when all entries of $\pi_n({\ve a})$  are negative.
%

Suppose first that $b>0$, so that $P({\ve a},b)$ has the single vertex ${\ve v}=(0,\ldots, 0, b/a_n)^T$, the polyhedron $P(\pi_n({\ve a}),b)$ is empty and $Q({\ve a},b)=\R^{n-1}_{\ge 0}$.
By Lemma \ref{integer_covering_simplex}, $(a_n-1)S^{n-1}_{\ve 1}+\Lambda_{\ve a}$ covers $\Z^{n-1}$. The affine lattice $L({\ve a},b)$ is an integer translate of the lattice $\Lambda_{\ve a}$.
Hence, by Lemma \ref{property_of_coverings},  there is a point ${\ve y}\in L({\ve a},b)\cap(a_n-1)S^{n-1}_{\ve 1}$.
In view of (\ref{main_assumption}), we have $a_n\le \|{\ve a}\|_{\infty}-1$.

Hence
\be\label{first_cooredinates_zero_case}
\|{\ve y}\|_{\infty}<\|{\ve a}\|_{\infty}-1\,.
\ee

Let now
\bea
{\ve z}=\left(y_1, \ldots, y_{n-1}, \frac{b-a_1y_1-\cdots-a_{n-1}y_{n-1}}{a_n}\right)^T\,.
\eea
Since ${\ve y}\in  L({\ve a},b)\cap Q({\ve a},b)$, the point ${\ve z}$ is an integer point in the knapsack polyhedron $P({\ve a},b)$. Thus, in view of (\ref{first_cooredinates_zero_case}),
it is sufficient to check that $|z_n-b/a_n|\le  \|{\ve a}\|_{\infty}-1$. Since ${\ve y}\in (a_n-1)S^{n-1}_{\ve 1}$, we have $|a_1|y_1+\cdots+|a_{n-1}|y_{n-1}\le (a_n-1)\|{\ve a}\|_{\infty}$. Therefore
\bea\begin{split}
\left |z_n-\frac{b}{a_n}\right |\le \frac{(a_n-1)\|{\ve a}\|_{\infty}}{a_n}
< \|{\ve a}\|_{\infty}-1\,.
\end{split}
\eea

Suppose now that $b\le 0$ and choose any vertex ${\ve v}$ of the polyhedron $P({\ve a},b)$. We have ${\ve v}=(0,\ldots, 0, b/a_j, 0, \ldots, 0)^T$ for some $1\le j<n$ and, consequently,
${\ve w}=\pi_n({\ve v})$ is a vertex of the polyhedron $P(\pi_n({\ve a}),b)$.
Let ${\ve u}$ be the point obtained from ${\ve w}$
by rounding up its $j$th entry, that is
${\ve u}=(0,\ldots, 0, \lceil b/a_j\rceil, 0, \ldots, 0)^T$. Since all entries of $\pi_n({\ve a})$ are negative, we have  ${\ve u}\in Q({\ve a},b)$.

By Lemmas  \ref{integer_covering_simplex} and  \ref{property_of_coverings},
there is a point ${\ve y}\in L({\ve a},b)$ in the simplex ${\ve u}+(a_n-1)S^{n-1}_{\ve 1}$.
In view of (\ref{main_assumption}), we have $a_n\le \|{\ve a}\|_{\infty}-1$.
%
Hence
\be\label{w_minus_y}
\|{\ve w}-{\ve y}\|_{\infty}\le a_n-1+ \|{\ve u}-{\ve w}\|_{\infty} <\|{\ve a}\|_{\infty}-1\,.
\ee

Let now
\bea
{\ve z}=\left(y_1, \ldots, y_{n-1}, \frac{b-a_1y_1-\cdots-a_{n-1}y_{n-1}}{a_n}\right)^T\,.
\eea
Since ${\ve y}\in {\ve u}+\R^{n-1}_{\ge 0}\subset Q({\ve a},b)$, the point ${\ve z}$ is an integer point in the knapsack polyhedron $P({\ve a},b)$.
Thus, noticing (\ref{w_minus_y}), it is sufficient to check that $z_n\le  \|{\ve a}\|_{\infty}-1$.
Observe that $\pi_n({\ve a})^T{\ve w}=b$ and $\pi_n({\ve a})^T{\ve y}=a_1y_1+\cdots+a_{n-1}y_{n-1}$. 
Therefore,
\bea\label{rounding_up_bound}
\begin{split}
b-a_1y_1-\cdots-a_{n-1}y_{n-1}=\pi_n({\ve a})^T({\ve w}-{\ve y})=\pi_n({\ve a})^T({\ve w}-{\ve u}+({\ve u}-{\ve y}))\\ \le\lfloor |\pi_n({\ve a})^T({\ve u}-{\ve w})|\rfloor+(a_n-1)\|{\ve a}\|_{\infty}
\le |a_j|-1+ (a_n-1)\|{\ve a}\|_{\infty} \,.
\end{split}
\eea
The latter bound implies
\bea\begin{split}
z_n\le \left\lfloor\frac{ |a_j|-1+(a_n-1)\|{\ve a}\|_{\infty}}{a_n}\right\rfloor
\le \|{\ve a}\|_{\infty}-1\,.
\end{split}
\eea
The lemma is proved.
\qed
\end{proof}

Now, to prove the statement (i) of Theorem \ref{thm_bounds_for_distance} in the general case
we will proceed by induction on $n$.

The basis step $n=2$ is immediately settled by Corollary \ref{coro_positive_entries} and Lemma \ref{lemma_bounds_for_distance_discrete_coverings}.
%
%
%
Suppose now that $n\ge 3$ and the statement (i) of Theorem \ref{thm_bounds_for_distance} holds in all dimensions $2\le k<n$.
We may assume without loss of generality that the condition (\ref{main_assumption}) is satisfied. Indeed, rearranging the entries of ${\ve a}$ and replacing ${\ve a}$, $b$ by $-{\ve a}$, $-b$ we may assume that $
0<a_n= \min_{i=1,\ldots, n} |a_i|$.  Furthermore,  $\min_{i=1,\ldots, n} |a_i|=\|{\ve a}\|_{\infty}$ would imply that ${\ve a}=(\pm 1,\ldots, \pm 1, 1)^T$. In this case ${\ve a}$
is totally unimodular  and, consequently, $P({\ve a},b)$ is an integral polyhedron.

Furthermore, by Corollary \ref{coro_positive_entries} and Lemma \ref{lemma_bounds_for_distance_discrete_coverings}, we may assume that at least one of the entries of ${\ve a}$ is negative and the polyhedron $P(\pi_n({\ve a}),b)$ is unbounded.

Let ${\ve v}$ be any vertex of $P({\ve a},b)$. Observe that ${\ve v}$ has at most one nonzero entry $b/a_j$ for some $1\le j \le n$ and that ${\ve w}=\pi_n({\ve v})$ is a vertex of the polyhedron $Q({\ve a}, b)$.
Suppose first that ${\ve w}\neq {\ve 0}$. Rearranging the first $n-1$ entries of ${\ve a}$, we may assume without loss of generality that
${\ve w}=(0,\ldots,0,b/a_{n-1})^T$.

Clearly, ${\ve w}$ is a vertex of $P(\pi_n({\ve a}),b)$. Suppose that $P(\pi_n({\ve a}),b)\cap L({\ve a},b)$ is not empty.
Then, by the inductive hypothesis, there exists an integer point
${\ve y}=(y_1,\ldots, y_{n-1})^T\in P(\pi_n({\ve a}),b)$ such that $\|{\ve w}-{\ve y}\|_{\infty}\le \|\pi_n({\ve a})\|_{\infty}-1$. Hence the point ${\ve z}=(y_1,\ldots, y_{n-1},0)^T\in P({\ve a},b)$
satisfies (\ref{thm_upper}).

Next we will suppose that $P(\pi_n({\ve a}),b)\cap L({\ve a},b)=\emptyset$. Noting (\ref{lattice_as_congruence}),
we have
\be\label{all_points_in_the_lattice}
 P(\pi_n({\ve a}),t)\cap L({\ve a},t)=P(\pi_n({\ve a}),t)\cap \Z^{n-1}\;\;\text{ for any }t\in \Z\,.
\ee
Hence $P(\pi_n({\ve a}),b)\cap \Z^{n-1}=\emptyset$ and, taking into account that $P(\pi_n({\ve a}),b)$ is unbounded,  we have $h=\gcd(\pi_n({\ve a}))\ge 2$.
Since $\gcd(h, a_n)=1$ and $Q({\ve a},b)=\{{\ve x}\in\R_{\ge 0}^{n-1}: \pi_n({\ve a})^T{\ve x}\le b\}$, there exists an integer $t$ such that

\begin{itemize}

\item[(i)] $t$ is in the interval $[b-ha_n+1, b)$,

\item[(ii)]  $P(\pi_n({\ve a}),t)\cap L({\ve a},b)$ is not empty or, equivalently, $t\equiv 0\mod h$ and $t\equiv b\mod a_n$.

\item[(iii)]  $P(\pi_n({\ve a}),t)\subset Q({\ve a},b)$.

\end{itemize}
Notice that the condition (i) implies (iii).

Let us choose a vertex ${\ve p}$ of the polyhedron $P(\pi_n({\ve a}),t)$ in the following way. If ${\ve p}'=(0,\ldots, 0, t/a_{n-1})^T$ is a vertex of $P(\pi_n({\ve a}),t)$, then we set ${\ve p}={\ve p}'$.
Otherwise, we select ${\ve p}$ as an arbitrary vertex of $P(\pi_n({\ve a}),t)$. By the inductive assumption, there exists an integer point ${\ve y}=(y_1,\ldots, y_{n-1})^T\in P(\pi_n({\ve a}),t)$
such that
\bea
||{\ve p}-{\ve y}||_\infty \le  \frac{\|{\ve a}\|_{\infty}}{h}-1\,.
\eea
By (\ref{all_points_in_the_lattice}), we have ${\ve y}\in L({\ve a},b)$ and, using (iii), there should exist an integer point ${\ve z}=(y_1,\ldots, y_{n-1},z)^T\in P({\ve a},b)$.
Now $a_1y_1+\cdots+a_{n-1}y_{n-1}+a_n z=b$ implies $z=(b-t)/a_n$. Hence by (i) we have
\be
\label{last_entry}
|z|< h\le \|{\ve a}\|_{\infty}.
\ee

Recall that ${\ve p}=(0,\ldots, 0, t/a_j, 0, \ldots, 0)^T$ for some $1\le j\le n-1$ and  ${\ve w}=(0,\ldots,0,b/a_{n-1})^T$.
If $j=n-1$, then by (i)
\bea
||{\ve w}-{\ve y}||_\infty \le\left|\frac{b-t}{a_j}\right|+\frac{\|{\ve a}\|_{\infty}}{h}-1\le \left|\frac{ha_n-1}{a_j}\right|+\frac{\|{\ve a}\|_{\infty}}{h}-1\,.
\eea
On the other hand, if $j\neq n-1$ then, by construction of ${\ve p}$, we have  $t/a_{n-1}<0$, so that
$bt<0$. Consequently, by (i), we get $|b|+|t|\le ha_n-1$. Hence
\bea
||{\ve w}-{\ve y}||_\infty \le \max\left(\left|\frac{t}{a_j}\right|, \left|\frac{b}{a_{n-1}}\right|\right)+ \frac{\|{\ve a}\|_{\infty}}{h}-1\le \left|\frac{ha_n-1}{a_j}\right|+\frac{\|{\ve a}\|_{\infty}}{h}-1\,.
\eea

Taking into account (\ref{last_entry}), we have
\bea
||{\ve v}-{\ve z}||_\infty \le \max\left(\left|\frac{ha_n-1}{a_j}\right|+\frac{\|{\ve a}\|_{\infty}}{h}-1, \|{\ve a}\|_{\infty}-1\right)\,.
\eea
Suppose first that $\|{\ve a}\|_{\infty}=h$. Then $|a_1|=\cdots=|a_{n-1}|=h$ and, using the assumption  (\ref{main_assumption}),  we have $a_n\le h-1$. Now we get
\bea
\left|\frac{ha_n-1}{a_j}\right|+\frac{\|{\ve a}\|_{\infty}}{h}-1 \le \frac{h(h-1)-1}{h}+1-1< h-1\,.
\eea

We may now assume that $\|{\ve a}\|_{\infty}\ge 2h$. Then, using (\ref{main_assumption}),
\bea\begin{split}
 \frac{ha_n-1}{|a_j|}+\frac{\|{\ve a}\|_{\infty}}{h}-1\le h+\frac{\|{\ve a}\|_{\infty}}{h}-1
\le \|{\ve a}\|_{\infty}-1.
\end{split}
\eea

Let us now suppose that ${\ve w}={\ve 0}$, so that ${\ve v}=(0,\ldots, 0, b/a_n)^T$.
In this setting, we will need to consider separately the case $\|{\ve a}\|_{\infty}=h=\gcd(\pi_n({\ve a}))$.
There exists an index $1\le i< n$ such that $\pi_i({\ve a})$ has at least one negative entry. Hence, the polyhedron $P(\pi_i({\ve a}),b)$ is unbounded.
Since  $|a_1|=\cdots=|a_{n-1}|=h$, we have $\gcd(\pi_i({\ve a}))=1$.

Next,
$\pi_i({\ve v})$ is a vertex of $P(\pi_i({\ve a}),b)$ and, since $\gcd(\pi_i({\ve a}))=1$, we have  $P(\pi_i({\ve a}),b)\cap \Z^{n-1}\neq \emptyset$.
Then, by the inductive hypothesis, there exists an integer point
${\ve y}\in P(\pi_i({\ve a}),b)$ such that $\|\pi_i({\ve v})-{\ve y}\|_{\infty}\le \|\pi_i({\ve a})\|_{\infty}-1$.  Therefore, (\ref{thm_upper}) holds with the integer point ${\ve z}=(y_1,\ldots, y_{i-1},0, y_{i+1}, \ldots, y_{n-1})^T\in P({\ve a},b)$.  For the rest of the proof of the part (i) we will assume without loss of generality that
\be\label{exclude_gcd}
\|{\ve a}\|_{\infty}\ge 2h\,.
\ee


Since ${\ve 0}={\ve w}\in Q({\ve a},b)$, there exists an integer $t$ such that

\begin{itemize}

\item[(i)] $t$ is in the interval $[-ha_n+1, 0]$,

\item[(ii)]  $P(\pi_n({\ve a}),t)\cap L({\ve a},b)$ is not empty,

\item[(iii)]  $P(\pi_n({\ve a}),t)\subset Q({\ve a},b)$.

\end{itemize}

Let ${\ve p}$ be a vertex of the polyhedron $P(\pi_n({\ve a}),t)$. By the inductive assumption, there exists an integer point ${\ve y}=(y_1,\ldots, y_{n-1})^T\in P(\pi_n({\ve a}),t)$
such that
\bea
||{\ve p}-{\ve y}||_\infty \le  \frac{\|{\ve a}\|_{\infty}}{h}-1\,.
\eea
By (\ref{all_points_in_the_lattice}), we have ${\ve y}\in L({\ve a},b)$. Therefore, using (iii), there exists an integer point ${\ve z}=(y_1,\ldots, y_{n-1},z)^T\in P({\ve a},b)$.
Next, $a_1y_1+\cdots+a_{n-1}y_{n-1}+a_n z=b$ implies $z=(b-t)/a_n$ and, by (i), we have
\be
\label{last_entry_for_zerocase}
\left |z-\frac{b}{a_n}\right |\le h-\frac{1}{a_n}<  \|{\ve a}\|_{\infty}-1\,,
\ee
where the last inequality follows from ({\ref{exclude_gcd}}).

Observe that ${\ve p}=(0,\ldots, 0, t/a_j, 0, \ldots, 0)^T$ for some $1\le j\le n-1$. Since ${\ve v}=(0,\ldots,0,b/a_{n})^T$ and taking into account (\ref{last_entry_for_zerocase}), we have
\bea
||{\ve v}-{\ve z}||_\infty \le \max\left(\left|\frac{t}{a_j}\right|+\frac{\|{\ve a}\|_{\infty}}{h}-1, \|{\ve a}\|_{\infty}-1\right)\,.
\eea
Now, using (i), (\ref{main_assumption}) and (\ref{exclude_gcd}),
\bea\begin{split}
\left|\frac{t}{a_j}\right|+\frac{\|{\ve a}\|_{\infty}}{h}-1 \le \frac{ha_n-1}{|a_j|}+\frac{\|{\ve a}\|_{\infty}}{h}-1\le h+\frac{\|{\ve a}\|_{\infty}}{h}-1\\
\le \|{\ve a}\|_{\infty}-1.
\end{split}
\eea

This completes the proof of part (i).

To prove part (ii), we set ${\ve a}=(k,\ldots,k,1)^T$ and $b=k-1$. The knapsack polyhedron $P({\ve a},b)$ contains precisely one integer point,  ${\ve z}=(k-1)\cdot\ve{e}_n$,
where $\ve{e}_i$ denotes the $i$-th unit-vector. Choosing the vertex ${\ve v}=\frac{k-1}{k}\cdot\ve{e}_1$ of $P({\ve a},b)$ we get
$\|{\ve v}-{\ve z}\|_{\infty}=k-1 =\|{\ve a}\|_{\infty}-1$.

\section{Proof of Corollary \ref{coro_upper_bound}}

The part (i) immediately follows from part (i) of Theorem \ref{thm_bounds_for_distance}.
To prove part (ii) it is sufficient to consider the same ${\ve a}, b$ as in the proof of part (ii) of Theorem \ref{thm_bounds_for_distance} and take $\ve{c}=\ve{e}_n$. Then the integer programming problem~\eqref{initial_IP} has precisely one feasible, and therefore optimal, integer solution $(k-1)\cdot\ve{e}_n$. Thus $IP(\ve{c},{\ve a},b)=k-1$.
The corresponding linear relaxation~\eqref{initial_LP} has the, in general not unique, optimal solution $\frac{k-1}{k}\cdot\ve{e}_1$ with $LP(\ve{c},{\ve a},b)=0$.
Hence, $\gap(\ve{c},{\ve a}) \ge IG(\ve{c}, {\ve a},b) = k-1 = (\|{\ve a}\|_{\infty}-1) \|{\ve c}\|_1$.

\section{Proof of Theorem \ref{Ratio}}

We will first obtain an analog of Lemma \ref{lemma_bounds_for_distance_via_Frobi} for the unbounded polyhedra $P({\ve a},b)$.

Set  ${\ve a}^+=(|a_1|, \ldots, |a_n|)^T$. For convenience, we will work with the quantity
\bea
\ffrob({\ve a}^+)= \frob({\ve a}^+)+|a_1| +\cdots+ |a_n|\,.
\eea
\begin{lemma}\label{lemma_bounds_for_distance_via_Frobi_unbounded_polyhedra}
Let ${\ve a}\in \Z^{ n}$ satisfy (\ref{nonzeroA}) and  $b\in \Z$. If $P({\ve a},b)$ is unbounded then 
\bea
\idist({\ve a},b)\le\frac{(n-1)\ffrob({\ve a}^+)}{\min_i |a_i|}\,.
\eea

\end{lemma}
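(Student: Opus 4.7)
The plan is to extend the covering argument of Lemma~\ref{lemma_bounds_for_distance_via_Frobi} to the unbounded mixed-sign case via a sign-flip and a carefully chosen translation of the covering simplex.  First I would normalize: by reindexing entries of ${\ve a}$ and possibly replacing $({\ve a},b)$ with $(-{\ve a},-b)$, one may assume $a_n>0$, $a_n=\min_i|a_i|$, and $b\ge 0$; the case $b=0$ is trivial since then ${\ve v}={\ve 0}\in\Z^n$, so take $b>0$ and, by the symmetry among vertices, restrict attention to ${\ve v}=(b/a_n){\ve e}_n$.  Unboundedness of $P({\ve a},b)$ forces ${\ve a}$ to have at least one negative entry, so $I_-:=\{i:a_i<0\}$ satisfies $1\le m:=|I_-|\le n-1$.

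The key step is to transfer Kannan's identity~\eqref{spaceKannan} for ${\ve a}^+$ to $(S_{\ve a},\Lambda_{\ve a})$ through the sign-flip $D':=\mathrm{diag}(\sign(a_1),\ldots,\sign(a_{n-1}))$, which carries $S_{{\ve a}^+}$ bijectively onto $S_{\ve a}$ and $\Lambda_{{\ve a}^+}$ onto $\Lambda_{\ve a}$; since an invertible linear map preserves the covering radius, one obtains $\mu(S_{\ve a},\Lambda_{\ve a})=\ffrob({\ve a}^+)$.  I would then define a shift ${\ve y}\in\R^{n-1}$ by $y_i=0$ for $i\in I_+\cap\{1,\ldots,n-1\}$ and $y_i=\ffrob({\ve a}^+)/|a_i|$ for $i\in I_-$, and apply Lemma~\ref{property_of_classical_coverings} (using that $L({\ve a},b)$ is an affine translate of $\Lambda_{\ve a}$) to produce ${\ve u}\in L({\ve a},b)\cap({\ve y}+\ffrob({\ve a}^+)S_{\ve a})$.

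The whole point of that choice of ${\ve y}$ is to guarantee that ${\ve u}\ge{\ve 0}$ with $\|{\ve u}\|_\infty\le\ffrob({\ve a}^+)/\min_i|a_i|$ and, simultaneously, that $\pi_n({\ve a})^T{\ve u}\le 0$: writing ${\ve u}={\ve y}+{\ve s}$ with ${\ve s}\in\ffrob({\ve a}^+)S_{\ve a}$, one checks $\pi_n({\ve a})^T{\ve y}=-m\ffrob({\ve a}^+)$ while $\pi_n({\ve a})^T{\ve s}=\sum_{i<n}|a_i|\,|s_i|\in[0,\ffrob({\ve a}^+)]$, so $\pi_n({\ve a})^T{\ve u}\in[-m\ffrob({\ve a}^+),(1-m)\ffrob({\ve a}^+)]\subset(-\infty,0]$.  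Setting $z_n:=(b-\pi_n({\ve a})^T{\ve u})/a_n$ then yields a nonnegative integer and ${\ve z}:=({\ve u},z_n)\in P({\ve a},b)\cap\Z^n$, with $|z_n-b/a_n|=|\pi_n({\ve a})^T{\ve u}|/a_n\le m\ffrob({\ve a}^+)/\min_i|a_i|\le(n-1)\ffrob({\ve a}^+)/\min_i|a_i|$; this dominates the bound on $\|{\ve u}\|_\infty$ and gives the desired inequality.

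The step I expect to be the main obstacle is precisely the design of the shift ${\ve y}$: it has to push ${\ve y}+\ffrob({\ve a}^+)S_{\ve a}$ into $\R^{n-1}_{\ge 0}$ so that ${\ve u}\ge{\ve 0}$, and at the same time keep the values of $\pi_n({\ve a})^T{\ve x}$ on that simplex nonpositive so that $z_n\ge 0$ holds automatically, without requiring any recession-ray correction.  The specific choice above balances these two requirements, and the factor $m\le n-1$ appearing in the $n$-th-coordinate discrepancy is what delivers the claimed $(n-1)$ in the bound; apart from this geometric observation, the remaining estimates are routine bookkeeping.
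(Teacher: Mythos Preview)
Your argument is essentially the paper's: your shift $\ve y$ coincides with the paper's $\ve u(t_0)$ at $t_0=\ffrob(\ve a^+)$, your sign-flip $D'$ is the paper's map $\phi$, and the application of Lemma~\ref{property_of_classical_coverings} together with the ensuing estimates match (the paper packages the inclusion $\ve y+\ffrob(\ve a^+)S_{\ve a}\subset\{\ve x\ge\ve 0:\pi_n(\ve a)^T\ve x\le 0\}$ as containment in the recession cone of $Q(\ve a,b)$). One caveat on the setup: the simultaneous normalization ``$a_n>0$, $a_n=\min_i|a_i|$, $b\ge 0$, and $\ve v=(b/a_n)\ve e_n$'' is not always achievable (try $\ve a=(3,-2)$, $b=5$), but you never actually use $a_n=\min_i|a_i|$; drop that clause and normalize \emph{per vertex} as the paper does---fix $\ve v$, reindex it into slot $n$, then flip signs so $a_n>0$, which automatically forces $b\ge 0$---and your computation goes through verbatim.
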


\begin{proof}
We will use the notation from the proof of Theorem \ref{thm_bounds_for_distance}. Let ${\ve v}$ be any vertex of $P({\ve a},b)$.
Rearranging the entries of ${\ve a}$ and replacing ${\ve a}$, $b$ by $-{\ve a}$, $-b$, we may assume that
${\ve v}=(0,\ldots, 0, b/a_n)^T$ and $a_n>0$. The unbounded polyhedron
\bea
Q({\ve a},b)=\pi_n(P({\ve a},b))=\{{\ve x}\in \R^{n-1}_{\ge 0}: a_1 x_1 +\cdots+ a_{n-1} x_{n-1}\le b\}\,
\eea
can be represented in the form
$Q({\ve a},b)=R({\ve a},b)+C({\ve a},b)$, where $R({\ve a},b)$ is a polytope and
\bea
C({\ve a},b)=\{{\ve x}\in \R^{n-1}_{\ge 0}: a_1 x_1 +\cdots+ a_{n-1} x_{n-1}\le 0\}\,
\eea
is the recession cone of the polyhedron $Q({\ve a},b)$ (see e. g. \cite[Section 8.2]{Schrijver}).
For $t\ge 0$ let  ${\ve u}(t)=(u_1(t), \ldots, u_{n-1}(t))^T$
with
\bea
u_i(t)=\left\{
\begin{array}{ll}
-t/a_i, & \text{ if }a_i<0\,,\\
0, & \text{ otherwise}\,.
\end{array}
\right.
\eea

Observe that ${\ve u(t)}+tS_{\ve a}\subset C({\ve a},b)$. Indeed, it is easy to check that $C({\ve a},b)$ contains all vertices
${\ve u}(t)$, ${\ve u}(t)+ (t/a_i) {\ve e}_i$, $1\le i\le n-1$ of the simplex ${\ve u(t)}+tS_{\ve a}$.

Recall that $\sign(a_i)=a_i/|a_i|$ for $1\le i \le n$.
Let $\phi$ be a linear map that sends a vector ${\ve x}=(x_1, \ldots, x_{n-1})^T\in \R^{n-1}$ to the vector $\phi({\ve x})= (\sign(a_1)x_1, \ldots, \sign(a_{n-1})x_{n-1})^T$. Observe that $\phi(S_{{\ve a}^+})=S_{\ve a}$ and $\phi(\Lambda_{{\ve a}^+})=\Lambda_{\ve a}$ . Therefore, by  (\ref{spaceKannan}) and the linear invariance of the covering radius, we have
\be\label{positive_Frobi_covering}
\ffrob({\ve a}^+)= \mu(S_{{\ve a}^+},\Lambda_{{\ve a}^+}) =  \mu(S_{\ve a}, \Lambda_{\ve a}).
\ee

Let $t_0=\ffrob({\ve a}^+)$.
Hence, using  (\ref{positive_Frobi_covering}), $t_0 S_{\ve a} + \Lambda_{\ve a}$ is a covering of $\R^{n-1}$.
We have  ${\ve u}(t_0)+t_0 S_{\ve a}\subset C({\ve a},b)\subset Q({\ve a},b)$. By Lemma \ref{property_of_classical_coverings}, there exists a point ${\ve y}$ of the affine lattice
$L({\ve a},b)$ in ${\ve u}(t_0)+t_0 S_{\ve a}$.
Therefore, $|y_i|\le t_0/|a_i|$ for $1\le i \le n-1$.
Hence, there exists a point ${\ve z}=(y_1, \ldots, y_{n-1},z)^T\in \Lambda({\ve a},b)\cap P({\ve a},b)$ with
\bea\begin{split}
z=\frac{b-a_1y_1-\cdots -a_{n-1}y_{n-1}}{a_n}=\frac{b}{a_n}-\frac{a_1y_1+\cdots+a_{n-1}y_{n-1}}{a_n}
\le v_n+\frac{(n-1)t_0}{a_n}\,.
\end{split}
\eea
The lemma is proved.
\qed
\end{proof}

Set
\bea
\RR= \{{\ve a}\in \Z^{ n}: 0<a_1\le\cdots \le a_n\} \,
\eea
and recall that
\bea
 N_{\epsilon}(t,H)=\#\left\{{\ve a}\in {\NC}(H): \max_{b\in\Z}\frac{\idist({\ve a},b)}{\|{\ve a}\|_{\infty}^{\epsilon}}>t \right\}\,.
\eea
%

%
%
By Lemmas \ref{lemma_bounds_for_distance_via_Frobi} and \ref{lemma_bounds_for_distance_via_Frobi_unbounded_polyhedra}, we have
%
%
\be\label{basic_bound}
\begin{split}
 N_{\epsilon}(t,H)\ll_n\, \#\left\{{\ve a}\in {\NC}(H)\cap \RR: \frac{(n-1)\ffrob({\ve a})}{a_1 a_n^{\epsilon}}>t \right\} \,.
\end{split}
\ee

We may assume $t \ge 10$ since otherwise (\ref{main_bound})
follows from $N_{\epsilon}(t,H)/N(H)\le 1$.
%
We keep $t' \in [1, t]$, to be fixed later. Then, setting $s({\ve a})= a_{n-1} a_n^{1/(n-1)}$ and noting (\ref{basic_bound}), we get
\be\label{two_terms}
\begin{split}
 N_{\epsilon}(t,H)\ll_n\, \#\left\{{\ve a}\in {\NC}(H)\cap \RR: \frac{(n-1)\ffrob({\ve a})}{s({\ve a})}>t'  \mbox{ or } \frac{s({\ve a})}{a_1a_n^{\epsilon}} > \frac{t}{t'}\right\}
\\
\le \,\#\left\{{\ve a}\in {\NC}(H)\cap \RR: \frac{\ffrob({\ve a})}{s({\ve a})}>\frac{t'}{n-1}\right\} \\ + \,\,\#\left\{{\ve a}\in {\NC}(H)\cap \RR: \frac{a_{n-1}}{a_1 a_n^{\epsilon-1/(n-1)}} > \frac{t}{t'}\right\}\,.
\end{split}
\ee

The first of the last two terms in (\ref{two_terms}) can be estimated using a special case of Theorem 3 in Str\"ombergsson \cite{Str}.

\begin{lemma}\label{first_term_bound}

\be\label{Str_bound}
\#\left\{{\ve a}\in {\NC}(H)\cap\RR: \frac{\ffrob({\ve a})}{s({\ve a})}>\frac{r}{n-1}\right\}\ll_n \frac{1}{r^{n-1}} N(H) \,,
\ee
uniformly over all $r>0$ and $H\ge 1$.
\end{lemma}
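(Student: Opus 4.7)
The plan is to deduce the bound from Theorem 3 of Strömbergsson \cite{Str}, which treats the asymptotic distribution of the normalised Frobenius number $\frob(\ve a)/s(\ve a)$ as $\ve a$ ranges over $\NC(H)\cap\RR$. That theorem produces a limit distribution whose tail decays like $r^{-(n-1)}$, and its proof supplies a quantitative uniform tail bound of the form
\begin{equation*}
\#\bigl\{\ve a \in \NC(H) \cap \RR : \frob(\ve a)/s(\ve a) > T \bigr\} \ll_n T^{-(n-1)} N(H)
\end{equation*}
uniformly for $T > 0$ and $H \ge 1$. The required estimate for $\ffrob$ then follows by a short reduction.

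Since \eqref{Str_bound} is trivial for bounded $r$ (the count is at most $N(H)$), I may assume $r$ is large. Using $\ffrob(\ve a) \le \frob(\ve a) + n a_n$, the condition $\ffrob(\ve a)/s(\ve a) > r/(n-1)$ forces at least one of
\begin{equation*}
\frob(\ve a)/s(\ve a) > \frac{r}{2(n-1)}, \qquad \frac{n\, a_n^{(n-2)/(n-1)}}{a_{n-1}} > \frac{r}{2(n-1)}.
\end{equation*}
The first alternative contributes $\ll_n r^{-(n-1)} N(H)$ by Strömbergsson's tail bound. The second forces $a_{n-1} \le c_n a_n^{(n-2)/(n-1)}/r$ for a dimensional constant $c_n$; since $1 \le a_1 \le \cdots \le a_{n-1}$ in $\RR$, the number of admissible tuples $(a_1,\ldots,a_{n-1})$ is $\ll_n \bigl(a_n^{(n-2)/(n-1)}/r\bigr)^{n-1}$, and summing over $a_n \le H$ gives $\ll_n H^{n-1}/r^{n-1}$. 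In view of $N(H) \asymp_n H^n$, this is $\ll_n r^{-(n-1)} N(H)/H$, which is much smaller than needed.

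The main obstacle is essentially bibliographic: Strömbergsson's Theorem 3 is phrased in the language of weak convergence of measures, so the step that needs care is confirming that the argument in \cite{Str} delivers the quantitative uniform tail bound displayed above, with the correct polynomial decay rate $T^{-(n-1)}$ and a constant depending only on $n$, rather than only the corresponding asymptotic limit statement. Once that is secured, the bridge from $\frob$ to $\ffrob$ is the elementary two-case split sketched above.
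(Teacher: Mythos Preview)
Your approach is correct, and it lands in the same place as the paper's, but the paper's proof is a single sentence: the bound follows immediately from Theorem~3 in \cite{Str} applied with $\mathcal{D}=[0,1]^{n-1}$. No bridge from $\frob$ to $\ffrob$ is carried out.

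The difference is that your bibliographic worry is misplaced. Str\"ombergsson's Theorem~3 is not merely a weak-convergence statement: it already supplies a quantitative tail bound, uniform over $H\ge 1$, with the polynomial decay $r^{-(n-1)}$ and an implied constant depending only on $n$. Moreover, it is stated in a form that applies directly to $\ffrob(\ve a)=\mu(S_{\ve a},\Lambda_{\ve a})$ (cf.\ Kannan's identity~\eqref{spaceKannan}); the parameter $\mathcal{D}$ there is a region constraining the shape vector $(a_1/a_n,\ldots,a_{n-1}/a_n)$, and the choice $\mathcal{D}=[0,1]^{n-1}$ matches the ordering imposed by $\RR$. Consequently your two-case split, which separates the contribution $\ffrob(\ve a)-\frob(\ve a)=a_1+\cdots+a_n\le n a_n$, is valid and cleanly executed but unnecessary: the cited theorem already bounds the $\ffrob$-tail directly, and the paper simply invokes it.
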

\begin{proof}
The inequality (\ref{Str_bound}) immediately follows from Theorem 3 in \cite{Str} applied with
${\mathcal D}=[0,1]^{n-1}$.
\qed\end{proof}
To estimate the last term in (\ref{two_terms}), we will need the following lemma.

\begin{lemma}\label{second_term_bound}

\be\label{second_term_ineq}
\#\left\{{\ve a}\in {\NC}(H)\cap \RR: \frac{a_{n-1}}{a_1 a_n^{\epsilon-1/(n-1)}} > r\right\}\ll_n \frac{1}{rH^{\epsilon-1/(n-1)}}N(H) \,,
\ee
uniformly over all $r>0$ and $H\ge 1$.
\end{lemma}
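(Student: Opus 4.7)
My plan is a direct counting argument. I would first drop the primitivity requirement (this only enlarges the set) and reduce the problem to counting tuples ${\ve a} \in \RR \cap [1,H]^n$ satisfying $a_{n-1} > r\, a_1\, a_n^{\epsilon - 1/(n-1)}$. Since $N(H) \asymp_n H^n$ (the standard count of primitive integer vectors with no zero entries in a cube), proving the lemma is equivalent to showing the count above is $\ll_n H^{n - \epsilon + 1/(n-1)}/r$, and this will be my target.

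The key observation is that since $a_{n-1}\le a_n$, the defining inequality immediately forces the necessary condition
\[
a_1 < a_n^{1-\epsilon+1/(n-1)}/r.
\]
Thus, for each fixed $a_n \in \{1,\ldots,H\}$, there are at most $a_n^{1-\epsilon+1/(n-1)}/r$ admissible values of $a_1$. With $a_1$ and $a_n$ fixed, the remaining coordinates $a_2,\ldots,a_{n-1}$ each lie in the integer interval $[a_1, a_n]$, contributing at most $a_n^{n-2}$ ordered tuples.

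Multiplying and summing over $a_n$, the total count is bounded by
\[
\frac{1}{r}\sum_{a_n=1}^H a_n^{(1-\epsilon+1/(n-1)) + (n-2)} = \frac{1}{r}\sum_{a_n=1}^H a_n^{n-1-\epsilon+1/(n-1)} \ll_n \frac{H^{n-\epsilon+1/(n-1)}}{r},
\]
where the final step uses the elementary estimate $\sum_{k=1}^H k^{\gamma} \ll_n H^{\gamma+1}$ with $\gamma = n-1-\epsilon+1/(n-1) > -1$ (valid since $n\ge 3$ and $\epsilon < 3/4$). Combining with $N(H) \asymp_n H^n$ yields the claim.

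I do not anticipate any substantive obstacle; the argument reduces to elementary exponent bookkeeping. The only mild subtlety is that when $r$ is small the bound $a_1 < a_n^{1-\epsilon+1/(n-1)}/r$ can exceed the trivial bound $a_1 \le a_n$, but since we only require an upper bound this causes no harm, and in that regime the conclusion of the lemma is in any case close to trivial (its right-hand side being comparable to, or larger than, $N(H)$).
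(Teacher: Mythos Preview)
Your proof is correct and follows essentially the same approach as the paper: both use $a_{n-1}\le a_n$ to deduce the constraint $a_1 < a_n^{1+1/(n-1)-\epsilon}/r$, then count crudely and invoke $N(H)\asymp_n H^n$. The only cosmetic difference is that the paper immediately replaces $a_n$ by $H$ (bounding $a_1 < H^{1+1/(n-1)-\epsilon}/r$ and each remaining coordinate by $H$) rather than stratifying by $a_n$ and summing, which yields the same bound with one fewer step.
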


\begin{proof}

Since ${\ve a}\in \RR$, we have $a_{n-1}\le a_n$. Hence
\bea
\#\left\{{\ve a}\in {\NC}(H)\cap \RR: \frac{a_{n-1}}{a_1 a_n^{\epsilon-1/(n-1)}} > r\right\}
\le  \#\left\{{\ve a}\in {\NC}(H)\cap \RR: a_n^{1+1/(n-1)-\epsilon}> r a_1\right\}\,.
\eea
Furthermore, all ${\ve a}\in {\NC}(H)\cap \RR$ with $a_n^{1+1/(n-1)-\epsilon}> r a_1$ are in the set
\bea
U=\{{\ve a}\in \Z^{ n}: 0< a_1< H^{1+1/(n-1)-\epsilon}/r, 0< a_i\le H, i=2, \ldots, n\}\,.
\eea
Since $\#(U\cap\Z^n)< \min\{H^{n+1/(n-1)-\epsilon}/r, H^n\}$ and $N(H)\asymp_n H^n$ (see e.g. Theorem 1 in \cite{SchmidtDuke}), the result follows.
\qed
\end{proof}

%


Then by (\ref{two_terms}), (\ref{Str_bound}) and (\ref{second_term_ineq})

\be\label{two_terms2}
 \frac{N_{\epsilon}(t,H)}{N(H)}
\ll_n \frac{1}{(t')^{n-1}} + \frac{t'}{tH^{\epsilon-1/(n-1)}}\,.
\ee

Next, we will bound $H$ from below in terms of $t$, similar to Theorem 3 in \cite{Str}.  The upper bound of Schur (\ref{Schur})  implies
$\ffrob({\ve a})< n a_1 a_n$.
Thus, using (\ref{basic_bound}),
\bea
\begin{split}
 N_{\epsilon}(t,H)\ll_n  \#\left\{{\ve a}\in {\NC}(H)\cap \RR: \frac{(n-1)\ffrob({\ve a})}{a_1 a_n^{\epsilon}}>t \right\} \\ \le \#\left\{{\ve a}\in {\NC}(H)\cap \RR: a_n^{1-\epsilon}>\frac{t}{n(n-1)} \right\}\,.
\end{split}
\eea

The latter set is empty if $H \le (t/(n(n-1)))^{\frac{1}{1-\epsilon}}$. Hence we may assume
\be\label{Tviat}
H > \left(\frac{t}{n(n-1)}\right)^{\frac{1}{1-\epsilon}}\,.
\ee

Using (\ref{two_terms2}) and (\ref{Tviat}), we have
\be\label{viaalpha}
\frac{N_{\epsilon}(t,H)}{N(H)}\ll_n  \frac{1}{(t')^{n-1}} + \frac{t'}{  t^{ 1+\frac{ 1 }{ 1-\epsilon } \left(\epsilon -\frac{1}{n-1}\right) }  }\,.
\ee

To minimise the exponent of the right hand side of (\ref{viaalpha}), set  $t'=t^\beta$ and choose $\beta$ with
\be\label{equal_powers}
\beta (n-1)= 1+\frac{ 1 }{ 1-\epsilon } \left(\epsilon -\frac{1}{n-1}\right)-\beta\,.
\ee
We get
\bea
\beta= \frac{n -2}{n(n-1)(1-\epsilon)}
\eea
and, by (\ref{viaalpha}) and (\ref{equal_powers}),
\bea
 \frac{N_{\epsilon}(t,H)}{N(H)} \ll_n t^{-\alpha(\epsilon,n)}\,
\eea
with $\alpha(\epsilon,n)=\beta (n-1)$. The theorem is proved.

\section{Proof of Corollary \ref{average}}
It is sufficient to show (\ref{average_ineq}) for
\be\label{epsilon_small}
\frac{2}{n}<\epsilon<\frac{3}{4}\,.
\ee
Observe that
 the conditions $n\ge 3$ and $\epsilon>2/n$ imply that in (\ref{main_bound}) $\alpha(\epsilon, n)>1$.
For integers $s$ consider vectors ${\ve a}\in \NC(H)$ with
\be\label{exponents}
e^{s-1}\le \max_{{\ve c}\in \Q^n}\frac{\gap({\ve c}, {\ve a})}{\|{\ve a}\|_{\infty}^{\epsilon}\|{\ve c}\|_1}<e^s\,.
\ee
By (\ref{gap-distance}), we have
\bea
\max_{{\ve c}\in \Q^n}\frac{\gap({\ve c}, {\ve a})}{\|{\ve a}\|_{\infty}^{\epsilon}\|{\ve c}\|_1}\le \max_{b\in\Z}\frac{\idist({\ve a},b)}{\|{\ve a}\|_{\infty}^{\epsilon}}\,.
\eea
Therefore, the contribution of vectors satisfying (\ref{exponents}) to the sum
\bea
\sum_{{\ve a}\in \NC(H)}\max_{{\ve c}\in \Q^n}\frac{\gap({\ve c}, {\ve a})}{\|{\ve a}\|_{\infty}^{\epsilon}\|{\ve c}\|_1}
\eea
is
\bea
\le N_{\epsilon}(e^{s-1}, H)e^{s}\ll_n e^{-\alpha(\epsilon, n) s}e^s N(H)\,,
\eea
where the last inequality holds by (\ref{main_bound}) and the upper bound in (\ref{epsilon_small}).
Therefore
\bea
\frac{1}{N(H)}\sum_{{\ve a}\in \NC(H)}\max_{{\ve c}\in \Q^n}\frac{\gap({\ve c}, {\ve a})}{\|{\ve a}\|_{\infty}^{\epsilon}\|{\ve c}\|_1}\ll_n \sum_{s=1}^\infty e^{s(1-\alpha(\epsilon, n))}\,.
\eea
Finally, observe that the series
\bea
\sum_{s=1}^\infty e^{s(1-\alpha(\epsilon, n))}
\eea
is convergent for $\alpha(\epsilon, n)>1$.

\section{Proof of Theorem \ref{thm_optimal_bound}}

We will first show that $\gap({\ve c},{\ve a})$ is bounded from below by the lattice programming gap
associated with a certain lattice program.

For a  vector $\bar {\ve l}\in \Q^{n-1}_{>0}$, a $(n-1)$-dimensional lattice $\Lambda\subset\Z^{n-1}$ and ${\ve r}\in \Z^{n-1}$
consider the lattice program (also referred to as the {\em group problem})
\be\begin{split}
\min\{\bar {\ve l}^T {\ve x}: {\ve x} \equiv {\ve r} (\modulo \Lambda), {\ve x}\in \R^{n-1}_{\ge 0}\}\,.
\end{split}
\label{generic_group_relaxation}
\ee
Here ${\ve x} \equiv {\ve r} (\modulo \Lambda)$ if and only if ${\ve x} - {\ve r}$ is a point of $\Lambda$.

Let $m(\Lambda,\bar{\ve l}, {\ve r})$ denote the value of the minimum in (\ref{generic_group_relaxation}).
The {\em lattice programming gap} $\gap(\Lambda, \bar{\ve l})$ of (\ref{generic_group_relaxation}) is defined as
\be\begin{split}
\gap(\Lambda,\bar{\ve l})=\max_{{\ve r}\in \Z^{n-1}}m(\Lambda,\bar{\ve l}, {\ve r})\,.
\end{split}
\label{maximum_gap}
\ee
The lattice programming gaps were introduced and studied for sublattices of all dimensions in $\Z^{n-1}$ by Ho\c{s}ten and Sturmfels \cite{HS}.

To proceed with the proof of the part (i), we assume without loss of generality that $\tau({\ve a}, {\ve c})=\{n\}$.
The  {\em (Gomory's) group relaxation} to (\ref{initial_IP}) is a lattice program
\be\label{technical}\begin{split}
\min\{ {\ve l}^T {\ve x}: {\ve x} \equiv {\ve r}\; (\modulo \Lambda_{{\ve a}}), {\ve x}\in \R^{n-1}_{\ge 0}\}\,,
\end{split}
\ee
where ${\ve l}={\ve l}({{\ve a}},{\ve c})$ and ${\ve r}\in \Z^{n-1}$ is any point of the affine lattice $\pi_n(\Lambda({\ve a}, b))$.  We refer the reader to \cite[Section 24.2]{Schrijver} and
 \cite{RT_structure} for a detailed introduction to the theory of group relaxations.

%
%

The group relaxation (\ref{technical}) provides a lower bound for the integrality gap of (\ref{initial_IP}). Specifically, we have
\bea
IG({\ve c}, {\ve a}, b)\ge m(\Lambda_{{\ve a}},{\ve l}, {\ve r})
\eea
and, consequently,
\be\label{IPGbiggerLPG}
\gap({\ve c}, {\ve a})\ge \gap(\Lambda_{\ve a}, {\ve l})\,.
\ee
We will need the following result, obtained in \cite{lpg}.
\begin{proposition}[Theorem 1.2 (i) in \cite{lpg}]
For any $\bar {\ve l}=(\bar l_1, \ldots, \bar l_k)^T\in \Q^k_{>0}$, $k\ge 2$, and any $k$-dimensional lattice $\Lambda\subset \Z^k$
\bea
\gap(\Lambda,\bar {\ve l})\ge \rho_k (\det(\Lambda) \bar l_1 \cdots \bar l_k)^{1/k}-\|\bar {\ve l}\|_1,.
\label{optimal_bound_lpg}
\eea
%
\label{optimal_bound_lpg}
\end{proposition}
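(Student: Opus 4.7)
The plan is to identify the lattice-programming gap, up to an additive loss of $\|\bar{\ve l}\|_1$, with a covering-radius problem on $\R^k$ for a suitable simplex, and then to normalize determinants so as to invoke the constant $\rho_k$.

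First, I would introduce the simplex $D:=\{{\ve x}\in\R^k_{\ge 0}:\bar{\ve l}^T{\ve x}\le 1\}$ and, for any ${\ve r}\in\R^k$, the continuous analogue of $m$, namely $m^*(\Lambda,\bar{\ve l},{\ve r}):=\inf\{\bar{\ve l}^T{\ve x}:{\ve x}\in\R^k_{\ge 0},\,{\ve x}\equiv {\ve r}\mod \Lambda\}$. A direct unwinding shows $m^*(\Lambda,\bar{\ve l},{\ve r})\le\mu$ is equivalent to $\mu D\cap({\ve r}+\Lambda)\ne\emptyset$, hence to ${\ve r}\in\mu D+\Lambda$; consequently
\[ \sup_{{\ve r}\in\R^k}m^*(\Lambda,\bar{\ve l},{\ve r})=\mu(D,\Lambda). \]

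Next, the diagonal linear map $T=\mathrm{diag}(\bar l_1,\ldots,\bar l_k)$ sends $D$ to $S^k_{\ve 1}$ and $\Lambda$ to a full-rank lattice $T\Lambda$ with $\det(T\Lambda)=\Delta:=\bar l_1\cdots\bar l_k\det(\Lambda)$. Linear invariance of the covering radius and the homogeneity $\mu(S^k_{\ve 1},c\Lambda')=c\,\mu(S^k_{\ve 1},\Lambda')$ together give
\[
\mu(D,\Lambda)=\mu(S^k_{\ve 1},T\Lambda)=\Delta^{1/k}\mu(S^k_{\ve 1},\Delta^{-1/k}T\Lambda)\ge \rho_k\Delta^{1/k},
\]
the last step being the defining infimum of $\rho_k$ applied to the unit-determinant lattice $\Delta^{-1/k}T\Lambda$. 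To descend from the continuous supremum over ${\ve r}\in\R^k$ to the discrete maximum over ${\ve r}\in\Z^k$, I would use a coordinatewise floor argument: pick any ${\ve r}^*\in\R^k$ and set ${\ve r}_0:=\lfloor {\ve r}^*\rfloor\in\Z^k$, so ${\ve \delta}:={\ve r}^*-{\ve r}_0\in[0,1)^k$. For every ${\ve x}_0\in\R^k_{\ge 0}$ with ${\ve x}_0\equiv {\ve r}_0\mod\Lambda$, the shifted vector ${\ve x}^*:={\ve x}_0+{\ve \delta}$ lies in $\R^k_{\ge 0}$ and satisfies ${\ve x}^*\equiv {\ve r}^*\mod\Lambda$, so $\bar{\ve l}^T{\ve x}^*\ge m^*(\Lambda,\bar{\ve l},{\ve r}^*)$; since $\bar{\ve l}^T{\ve \delta}<\|\bar{\ve l}\|_1$, infimizing over ${\ve x}_0$ yields $m(\Lambda,\bar{\ve l},{\ve r}_0)\ge m^*(\Lambda,\bar{\ve l},{\ve r}^*)-\|\bar{\ve l}\|_1$. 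Taking the supremum over ${\ve r}^*\in\R^k$ and chaining with the previous two steps gives
\[
\gap(\Lambda,\bar{\ve l})\ge \mu(D,\Lambda)-\|\bar{\ve l}\|_1\ge \rho_k\Delta^{1/k}-\|\bar{\ve l}\|_1.
\]

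The main obstacle is the conceptual identification in the first step: recognizing that the supremum of the continuous relaxation is exactly the covering radius $\mu(D,\Lambda)$ for the simplex $D$ that jointly encodes nonnegativity and the linear functional $\bar{\ve l}^T{\ve x}$. Once this is in place, the diagonal rescaling to $\rho_k$ is routine and the floor rounding is elementary, but the latter is subtle in one respect: the shift ${\ve \delta}\in[0,1)^k$ must \emph{simultaneously} preserve nonnegativity of ${\ve x}^*$ and move the coset from ${\ve r}_0$ to ${\ve r}^*$, and it is this double role that pins down the $\|\bar{\ve l}\|_1$ loss.
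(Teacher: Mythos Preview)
The paper does not supply a proof of this proposition; it is quoted verbatim as Theorem~1.2(i) of \cite{lpg} and used as a black box. Your argument is correct and is the natural one: the identification $\sup_{{\ve r}\in\R^k} m^*(\Lambda,\bar{\ve l},{\ve r})=\mu(D,\Lambda)$ is the direct analogue of Kannan's identity \eqref{spaceKannan} used elsewhere in the paper, the diagonal rescaling to $S^k_{\ve 1}$ and the unit-determinant normalisation are routine, and the floor-rounding step correctly accounts for the $\|\bar{\ve l}\|_1$ loss when passing from real to integer residues.
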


Note that for $n=2$ we have $\gap(\Lambda_{\ve a}, {\ve l})=l_1(|a_2|-1)$ and thus (\ref{IPGbiggerLPG}) implies (\ref{optimal_bound}).
For $n>2$, the bound (\ref{optimal_bound}) immediately follows from (\ref{IPGbiggerLPG}) and Proposition \ref{optimal_bound_lpg}.

The proof of the part (ii) will be based on the following lemma.

\begin{lemma}\label{Lpg_link} Let ${\ve a}\in \Z^n_{>0}$ satisfy (\ref{nonzeroA}),   ${\ve c}=(a_1, \ldots, a_{n-1}, 0)^T$ and ${\ve l}=(a_1, \ldots, a_{n-1})^T$. Then
\be\label{Lpg_via_gap}
\gap({\ve c}, {\ve a})= \gap(\Lambda_{\ve a}, {\ve l})\,.
\ee
\end{lemma}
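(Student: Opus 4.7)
The plan is to prove the two inequalities $\gap({\ve c},{\ve a}) \ge \gap(\Lambda_{\ve a},{\ve l})$ and $\gap({\ve c},{\ve a}) \le \gap(\Lambda_{\ve a},{\ve l})$ separately. The first direction is immediate: it is precisely inequality (\ref{IPGbiggerLPG}) applied in this special setting, once one verifies that for the chosen ${\ve c}$ the hypothesis $\tau({\ve a},{\ve c})=n$ is indeed satisfied. Checking this is straightforward, because for any positive $b$ the LP objective ${\ve c}^T{\ve x}=\sum_{i<n}a_i x_i$ is nonnegative on $P({\ve a},b)$ and attains $0$ uniquely at $(0,\dots,0,b/a_n)^T$, so $\tau=n$ and correspondingly ${\ve l}={\ve l}({\ve a},{\ve c})=(a_1,\dots,a_{n-1})^T$, matching the statement.

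For the reverse inequality I would work directly with the definitions. Fix any feasible right-hand side $b$. Since $LP({\ve c},{\ve a},b)=0$, we have $IG({\ve c},{\ve a},b)=IP({\ve c},{\ve a},b)$. After eliminating $x_n$ via $x_n=(b-{\ve l}^T{\ve x}')/a_n$, where ${\ve x}'=(x_1,\dots,x_{n-1})^T$, the integer program becomes
\begin{equation*}
IP({\ve c},{\ve a},b)=\min\{{\ve l}^T{\ve x}' : {\ve x}'\in\Z^{n-1}_{\ge 0},\ {\ve l}^T{\ve x}'\le b,\ {\ve l}^T{\ve x}'\equiv b \mod a_n\}.
\end{equation*}
Pick any ${\ve r}_b\in\Z^{n-1}$ with ${\ve l}^T{\ve r}_b\equiv b \mod a_n$; then the group relaxation (\ref{technical}) with residue ${\ve r}_b$ has the same objective and constraints except that the inequality ${\ve l}^T{\ve x}'\le b$ is dropped. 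Hence $m(\Lambda_{\ve a},{\ve l},{\ve r}_b)\le IP({\ve c},{\ve a},b)$ automatically, and equality holds whenever the optimizer of the group relaxation already satisfies ${\ve l}^T{\ve x}'\le b$.

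Now I would split into two cases on the size of $b$. If $b\ge\gap(\Lambda_{\ve a},{\ve l})$, then $m(\Lambda_{\ve a},{\ve l},{\ve r}_b)\le\gap(\Lambda_{\ve a},{\ve l})\le b$, so the optimizer of the group relaxation is feasible for the IP and $IP({\ve c},{\ve a},b)=m(\Lambda_{\ve a},{\ve l},{\ve r}_b)\le\gap(\Lambda_{\ve a},{\ve l})$. If instead $b<\gap(\Lambda_{\ve a},{\ve l})$, then any feasible integer ${\ve x}$ gives ${\ve c}^T{\ve x}=b-a_n x_n\le b<\gap(\Lambda_{\ve a},{\ve l})$, so $IP({\ve c},{\ve a},b)\le b<\gap(\Lambda_{\ve a},{\ve l})$. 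Either way $IG({\ve c},{\ve a},b)\le\gap(\Lambda_{\ve a},{\ve l})$, and taking the maximum over $b$ yields $\gap({\ve c},{\ve a})\le\gap(\Lambda_{\ve a},{\ve l})$.

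There is no real obstacle; the only delicate point is the first case analysis, where one has to notice that the group relaxation minimum is automatically bounded by $\gap(\Lambda_{\ve a},{\ve l})$ and hence by $b$, allowing us to lift the group-optimal ${\ve x}'$ back to a feasible integer point of $P({\ve a},b)$ by setting $x_n=(b-{\ve l}^T{\ve x}')/a_n\ge 0$. Combining both inequalities then gives the claimed identity (\ref{Lpg_via_gap}).
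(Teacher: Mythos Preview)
Your argument is correct and follows essentially the same route as the paper: eliminate $x_n$, observe that $LP({\ve c},{\ve a},b)=0$, and compare the resulting constrained problem $\min\{{\ve l}^T{\ve x}':{\ve x}'\in\Z^{n-1}_{\ge 0},\ {\ve l}^T{\ve x}'\le b,\ {\ve x}'\equiv{\ve r}\ (\mathrm{mod}\ \Lambda_{\ve a})\}$ with the group relaxation. The only difference is organisational: the paper avoids your case split by noting directly that whenever the IP is feasible one may take ${\ve r}=\pi_n({\ve y})$ for a feasible ${\ve y}$, so ${\ve l}^T{\ve r}\le{\ve a}^T{\ve y}=b$ and hence the group minimum is automatically $\le b$, making the constraint ${\ve l}^T{\ve x}'\le b$ redundant and giving $IG({\ve c},{\ve a},b)=m(\Lambda_{\ve a},{\ve l},{\ve r})$ in one stroke for every feasible $b$; both inequalities then fall out at once without invoking (\ref{IPGbiggerLPG}) separately.
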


\begin{proof}
Recall that $\Lambda({\ve a},b)=\{{\ve x}\in \Z^n: {\ve a}^T{\ve x}=b\}$ denotes the affined lattice formed by  integer points in the affine hyperplane ${\ve a}^T{\ve x}=b$ and $P({\ve a},b)=\{{\ve x}\in\R_{\ge 0}: {\ve a}^T{\ve x}=b\}$ denotes the knapsack polytope.
The assumption ${\ve a}\in \Z^n_{>0}$ implies that the linear programming relaxation (\ref{initial_LP}) is feasible if and only if $b$ is nonnegative. Suppose that for a nonnegative $b$ the knapsack problem (\ref{initial_IP}) has solution ${\ve y}\in \Z^n_{\ge 0}$. Then  for ${\ve r}=\pi_n({\ve y})\in \Z^{n-1}_{\ge 0}$
\bea
\pi_n(\Lambda({\ve a},b))={\ve r}+\Lambda_{\ve a}\,.
\eea
As $c_n=0$, the optimal value of the linear programming relaxation $LP({\ve c},{\ve a}, b)=0$.
Therefore, noting that ${\ve c}=(a_1, \ldots, a_{n-1}, 0)^T$ and ${\ve l}=\pi_n({\ve c})$,
\be\label{Extra_constraint}
IG({\ve c},{\ve a}, b)=\min\{{\ve l}^T {\ve x}: {\ve x}\in {\ve r}+\Lambda_{\ve a}\,, {\ve x}\in \pi_n(P({\ve a}, b)) \}\,.
\ee
Since
\bea
\pi_n(P({{\ve a}}, b))=bS_{{\ve a}}=\{{\ve x}\in \R^{n-1}_{\ge 0}: {\ve l}^T {\ve x}\le b\}\,
\eea
and ${\ve l}^T {\ve r}\le {\ve a}^T{\ve y}=b$, the constraint ${\ve x}\in \pi_n(P({{\ve a}}, b))$ in (\ref{Extra_constraint}) can be replaced by ${\ve x}\in \R^{n-1}_{\ge 0}$. Consequently, we have
\bea 
IG({\ve c},{\ve a}, b)= m(\Lambda_{\ve a},{\ve l}, {\ve r})\,.
\eea
Hence, by (\ref{maximum_gap}), we obtain
\be\label{gap_less}
\gap({\ve c}, {\ve a})\le  \gap(\Lambda_{\ve a}, {\ve l})\,.
\ee
Suppose now that $\gap(\Lambda_{\ve a}, {\ve l})= m(\Lambda_{\ve a},{\ve l}, {\ve r}_0)$. Then
\bea
IG({\ve c}, {\ve a}, \pi_n({\ve a})^T{\ve r}_0)= m(\Lambda,{\ve l}, {\ve r}_0)\,.
\eea
 Together with (\ref{gap_less}), this implies (\ref{Lpg_via_gap}).
\qed\end{proof}

As was shown in the proof of Theorem 1.1 in \cite{lpg}, for ${\ve l}=(a_1, \ldots, a_{n-1})^T$
\bea
\gap(\Lambda_{\ve a}, {\ve l})=\frob({\ve a})+a_n\,.
\eea
Thus we obtain the following corollary.

\begin{cor}\label{Frobi_link} Let ${\ve a}=(a_1, \ldots, a_n)^T\in \Z^n_{>0}$ satisfy (\ref{nonzeroA}) and ${\ve c}=(a_1, \ldots, a_{n-1}, 0)^T$. Then
\bea
\gap({\ve c}, {\ve a})= \frob({\ve a})+a_n\,.
\eea
\end{cor}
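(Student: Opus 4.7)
The plan is to derive the corollary directly by chaining Lemma \ref{Lpg_link} with the explicit evaluation of the lattice programming gap that the author attributes to \cite{lpg} in the sentence immediately preceding the corollary statement. I would first check that the hypotheses of Lemma \ref{Lpg_link} are exactly the ones imposed here: ${\ve a}\in \Z^n_{>0}$ satisfies (\ref{nonzeroA}), and the cost vector ${\ve c}=(a_1,\ldots,a_{n-1},0)^T$ is precisely the one appearing in that lemma, with ${\ve l}=(a_1,\ldots,a_{n-1})^T=\pi_n({\ve c})$. Because $c_n=0$, this ${\ve l}$ also agrees with the dual slack ${\ve l}({\ve a},{\ve c})$ from Theorem \ref{thm_optimal_bound}, which keeps the notation consistent.

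Next I would apply Lemma \ref{Lpg_link}, which gives the identity
\begin{equation*}
\gap({\ve c},{\ve a})=\gap(\Lambda_{\ve a},{\ve l}).
\end{equation*}
I would then substitute the cited formula
\begin{equation*}
\gap(\Lambda_{\ve a},{\ve l})=\frob({\ve a})+a_n
\end{equation*}
for this specific choice of ${\ve l}$ and lattice $\Lambda_{\ve a}$ (of determinant $a_n$), obtained in the proof of Theorem 1.1 in \cite{lpg}. Combining the two equalities yields $\gap({\ve c},{\ve a})=\frob({\ve a})+a_n$, which is the corollary.

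Because both ingredients are already in hand, there is no real technical obstacle; the proof is essentially a one-line composition. The only point that deserves brief mention is the verification that the ${\ve l}$ of Lemma \ref{Lpg_link} matches the ${\ve l}$ for which the lattice programming gap has the stated closed form, so that the two quoted equalities can be composed without any auxiliary argument.
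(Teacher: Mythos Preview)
Your proposal is correct and matches the paper's argument exactly: the corollary is obtained by combining Lemma \ref{Lpg_link} with the identity $\gap(\Lambda_{\ve a},{\ve l})=\frob({\ve a})+a_n$ from \cite{lpg}, and the paper presents it as an immediate consequence with no further work. Your additional remark about verifying that the two occurrences of ${\ve l}$ coincide is a sensible sanity check but not something the paper spells out.
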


To complete the proof of Theorem \ref{thm_optimal_bound} we will need the following result, obtained in \cite{AG}.
\begin{proposition}[see Theorem 1.1 (ii) in \cite{AG}]
For any $\epsilon>0$, there exists a vector ${\ve a}\in \Z^{n}_{>0}$  such that
\bea
\frob({\ve a})<(\rho_{n-1}+\epsilon) (a_1\cdots a_n)^{1/(n-1)}- \|{\ve a}\|_1\,.
\eea
\label{optimal_bound_Frobenius}
\end{proposition}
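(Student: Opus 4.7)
The plan is to exhibit, for a positive density of vectors ${\ve a}\in \NC(H)$, a single cost vector ${\ve c}\in \Q^n$ realising a ratio bounded below by a constant depending only on $n$. For any ${\ve a}\in \Z_{>0}^n$ satisfying (\ref{nonzeroA}), I would take the test vector ${\ve c}=(a_1,\dots,a_{n-1},0)^T$. Since $c_n=0$ and $c_i=a_i>0$ for $i<n$, the linear programming relaxation (\ref{initial_LP}) for any positive $b\in\Z$ has optimal value $0$ attained uniquely at the vertex $(0,\dots,0,b/a_n)^T$, so $({\ve a},{\ve c})$ is generic with $\tau({\ve a},{\ve c})=n$ and ${\ve l}({\ve a},{\ve c})=\pi_n({\ve c})-c_n a_n^{-1}\pi_n({\ve a})=(a_1,\dots,a_{n-1})^T$.

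Applying Theorem~\ref{thm_optimal_bound}(i) to this generic pair yields
\bea
\gap({\ve c},{\ve a})\ge \rho_{n-1}(a_1 a_2\cdots a_n)^{1/(n-1)}-(a_1+\cdots+a_{n-1})\,,
\eea
since $(a_\tau l_1\cdots l_{n-1})^{1/(n-1)}=(a_n a_1\cdots a_{n-1})^{1/(n-1)}$ and $\|{\ve l}\|_1=a_1+\cdots+a_{n-1}$. I would then restrict to the ``balanced positive'' subset
\bea
\NC^{\star}(H)=\{{\ve a}\in\Z^n: H/2\le a_i\le H \text{ for all }i,\;\gcd({\ve a})=1\}\subset \NC(H)\,.
\eea
A routine M\"obius sieve argument (or the asymptotics used in the proof of Theorem~\ref{Ratio} via Theorem~1 of \cite{SchmidtDuke}) shows $\#\NC^{\star}(H)\asymp_n H^n\asymp_n N(H)$, so $\NC^{\star}(H)$ occupies a proportion of $\NC(H)$ bounded below by a positive constant $c_n$ for all large $H$.

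For every ${\ve a}\in\NC^{\star}(H)$ one has $(a_1\cdots a_n)^{1/(n-1)}\ge (H/2)^{n/(n-1)}$ and $a_1+\cdots+a_{n-1}\le (n-1)H$; since $H^{n/(n-1)}/H\to\infty$, the linear correction is absorbed into the main term for $H$ large, giving $\gap({\ve c},{\ve a})\gg_n H^{n/(n-1)}$. On the other hand $\|{\ve a}\|_{\infty}^{1/(n-1)}\|{\ve c}\|_1\le H^{1/(n-1)}\cdot(n-1)H=(n-1)H^{n/(n-1)}$, so
\bea
\max_{{\ve c}'\in\Q^n}\frac{\gap({\ve c}',{\ve a})}{\|{\ve a}\|_{\infty}^{1/(n-1)}\|{\ve c}'\|_1}\ge \frac{\gap({\ve c},{\ve a})}{\|{\ve a}\|_{\infty}^{1/(n-1)}\|{\ve c}\|_1}\gg_n 1\,,\quad {\ve a}\in \NC^{\star}(H)\,.
\eea
Averaging over $\NC(H)$ and bounding the sum below by its contribution from $\NC^{\star}(H)$ then produces the claimed $\gg_n 1$.

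The only mildly delicate step is the density estimate $\#\NC^{\star}(H)\asymp_n N(H)$, requiring that the coprimality constraint not kill a positive fraction of the box $[H/2,H]^n\cap\Z^n$; this follows from the usual density $1/\zeta(n)>0$ of primitive integer points. All remaining ingredients are direct substitutions into Theorem~\ref{thm_optimal_bound}(i), so no further obstacle is expected.
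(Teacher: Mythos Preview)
Your proposal does not address the stated Proposition~\ref{optimal_bound_Frobenius} at all. That proposition asserts the \emph{existence} of a single vector ${\ve a}\in\Z^n_{>0}$ for which the Frobenius number satisfies the \emph{upper} bound $\frob({\ve a})<(\rho_{n-1}+\epsilon)(a_1\cdots a_n)^{1/(n-1)}-\|{\ve a}\|_1$; this is a statement about the near-optimality of the covering constant $\rho_{n-1}$ and is simply quoted from \cite{AG} without proof in the present paper. What you have written is instead a proof sketch for Theorem~\ref{lower_upper}: you produce a \emph{lower} bound on $\gap({\ve c},{\ve a})$ via Theorem~\ref{thm_optimal_bound}(i), restrict to a balanced positive density subset of $\NC(H)$, and average. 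None of these steps bears on the proposition, which involves neither $\gap$, nor averages over $\NC(H)$, nor lower bounds.

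Incidentally, as a proof of Theorem~\ref{lower_upper} your argument is close in spirit to the paper's own, with one minor difference: the paper chooses ${\ve c}_{\ve a}=-{\ve e}_{i({\ve a})}$ (so $\|{\ve c}_{\ve a}\|_1=1$), whereas you take ${\ve c}=(a_1,\dots,a_{n-1},0)^T$, which inflates $\|{\ve c}\|_1$ by a factor of order $H$ but is compensated since your $\gap({\ve c},{\ve a})$ estimate is correspondingly larger. Both reach the same conclusion. But for the proposition actually in question, your write-up is off target; a correct treatment would either reproduce the construction from \cite{AG} (building lattices achieving covering radius close to $\rho_{n-1}$ and translating via Kannan's identity \eqref{spaceKannan}) or, as the paper does, simply cite that reference.
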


For $n=2$, we have
\bea
g({\ve a})=a_1a_2-a_1-a_2
\eea
 by a classical result of Sylvester (see e.g. \cite{Alf}). Hence
the part (ii) immediately follows from Corollary \ref{Frobi_link}.
For $n>2$, 
the part (ii) follows from  Corollary \ref{Frobi_link} and Proposition \ref{optimal_bound_Frobenius}.

\section{Proof of Theorem \ref{lower_upper}}

We will
denote for ${\ve a}\in Q(H)\cap\Z^n_{>0}$ the index of a maximum coordinate by
$i({\ve a})$ and we set ${\ve c}_{{\ve a}}=-\ve e_{i({\ve a})}$.  
The tuples $({\ve a},{\ve c}_{\ve a} )$ are generic and in view of Theorem
\ref{thm_optimal_bound} we find
\begin{equation*}
\begin{split}
\gap({{\ve c}_{\ve a}}, {\ve a})  & \geq \rho_{n-1} a_{i({\ve a})}^{1/(n-1)}\left(\prod_{i=1, i\ne
  i({\ve a})}^{n}\frac{a_i}{a_{i({\ve a})}}\right)^{1/(n-1)} - \sum_{i=1, i\ne i({\ve a})}^n
\frac{a_i}{a_{i({\ve a})}}  \\
& \geq  \frac{1}{\|\ve {\ve a}\|_\infty}\rho_{n-1} \left(\prod_{i=1}^{n} a_i\right)^{1/(n-1)}-n+1.
\end{split}
\end{equation*}
Hence
\begin{equation*}
\frac{\gap({\ve c}_{\ve a},{\ve a})}{\|{\ve a}\|_\infty^{1/(n-1)}} \geq
\rho_{n-1} \frac{1}{\|{\ve a}\|_\infty^{1+1/(n-1)}}
\left(\prod_{i=1}^{n} a_i\right)^{1/(n-1)}-\frac{n-1}{\|{\ve a}\|_\infty^{1/(n-1)}}.
\end{equation*}
Next we observe that
\begin{equation*}
\begin{split}
\sum_{{\ve a}\in Q(H)\cap\Z^n_{>0}} \frac{1}{\|{\ve a}\|_\infty^{1/(n-1)}}& \leq
n\,H^{n-1}\sum_{t=1}^H  \frac{1}{t^{1/(n-1)}} \leq
n\,H^{n-1}\left(1+\int_{1}^H \frac{1}{t^{1/(n-1)}}{\rm d}t\right)\\
&\leq \frac{n-1}{n-2}n\,H^{n-1} H^{1-1/(n-1)} \leq 2\,n\, H^{n-1/(n-1)}
\end{split}
\end{equation*}
for $n\geq 3$.
Since $N(H)\asymp_n H^n$ (see e.g. Theorem 1 in \cite{SchmidtDuke}), for sufficiently large $H$ we have $N(H)\geq (1/3)H^n$, say. Thus, so far we know that
\begin{equation*}
\begin{split}
\frac{1}{N(H)}\sum_{{\ve a}\in Q(H)}  \max_{c\in\Q^n}\frac{\gap({\ve c}, {\ve a})}{\|{\ve a}\|_\infty^{1/(n-1)}\|\ve c\|_1}  \geq \frac{1}{N(H)}
\sum_{{\ve a}\in Q(H)\cap\Z^n_{>0}}
\frac{\gap({\ve c}_{\ve a}, {\ve a})}{\|{\ve a}\|_\infty^{1/(n-1)}}\\
\geq
\rho_{n-1}\frac{1}{N(H)} \sum_{{\ve a}\in Q(H)\cap\Z^n_{>0}} 
\left(\prod_{i=1}^{n} \frac{a_i}{\|{\ve a}\|_\infty}\right)^{1/(n-1)}- \frac{6n^2}{H^{1/(n-1)}}
\end{split}
\end{equation*}
when $H$ is large enough. Instead of summing over all $Q(H)\cap\Z^n_{>0}$ in
the first summand we will consider the subset
\begin{equation*}
{\overline Q}(H)=\left\{{\ve a}\in Q(H)\cap\Z^n_{>0} :
a_i\geq \frac{H}{2},\, 1\leq i\leq n\right \}
\end{equation*}
for which we know  $a_i/\|{\ve a}\|_\infty\geq 1/2$. In order to
estimate (very roughly) the cardinality  of  ${\overline Q}(H)$ we
start with $n=2$ and we denote this $2$-dimensional  set by ${\overline
  Q}_2(H)$. There are at most
\begin{equation*}
\left(\left\lfloor \frac{H}{m}\right\rfloor
-\left\lceil\frac{H}{2m}\right\rceil +1 \right)^2 \leq \left(\frac{H}{2m}+1\right)^2
\end{equation*}
tuples  $(a,b)\in [0,H]^2$ with $\gcd(a,b)=m$ and $a,b\geq H/2$. Thus
 \begin{equation*}
 \begin{split}
 \#{\overline Q}_2(H)  \geq \left(\frac{H}{2}\right)^2 -
 \sum_{m=2}^{H/2}\left(\frac{H}{2m}+1\right)^2
  \geq \frac{H^2}{4}\left(1-\sum_{m=2}^\infty \frac{1}{m^2}\right) -
 H \sum_{m=2}^{H/2}\frac{1}{m} -\frac{H}{2} \\
\geq  \frac{H^2}{4}\left(2-\frac{\pi^2}{6}\right) - H
\sum_{m=1}^{H/2}\frac{1}{m}
 \geq \frac{H^2}{12} - H\, (1+\ln(H/2))  \geq    \frac{H^2}{12} -2\, H\ln(H),
 \end{split}
 \end{equation*}
for $H\geq 2$. Since $\# {\overline Q}(H)\geq  \#{\overline
  Q}_2(H)\times (H/2)^{n-2}$ we get
\begin{equation*}
\begin{split}
\frac{1}{N(H)} \sum_{{\ve a}\in Q(H)\cap\Z^n_{>0}} 
\left(\prod_{i=1}^{n} \frac{a_i}{\|{\ve a}\|_\infty}\right)^{1/(n-1)}
\geq \frac{1}{(2H)^n} \sum_{{\ve a}\in \overline{Q}(H)}
\left(\prod_{i=1}^{n} \frac{a_i}{\|{\ve a}\|_\infty}\right)^{1/(n-1)} \\
\geq \frac{\# {\overline Q}(H)}{(2H)^n}\left(\frac{1}{2}\right)^{n/(n-1)} \geq
\left(\frac{1}{4}\right)^{n} \left(\frac{1}{12}-2\frac{\ln H
  }{H}\right)  \geq \left(\frac{1}{4}\right)^{n+2},
\end{split}
\end{equation*}
for $H$ large enough. Hence, all together we have found for sufficiently large  $H$
\begin{equation*}
\begin{split}
  \frac{1}{N(H)}\sum_{{\ve a}\in Q(H)} & \max_{c\in\Q^n}\frac{\gap({\ve c}, {\ve a})}{\|\ve
 {\ve a}\|_\infty^{1/(n-1)}\|\ve c\|_1}  \geq
\rho_{n-1}\left(\frac{1}{4}\right)^{n+2} - 6n^2
\frac{1}{H^{1/(n-1)}}\,. \\
\end{split}
\end{equation*}
The theorem is proved.

\enddocument